\pgfplotsset{compat=newest}
\newcommand\fractalpercolation[4]{
	   \pgfmathsetseed{#4}
	      \pgfmathsetmacro{\p}{#1} 
	      \pgfmathsetmacro{\b}{#2} 
	      \pgfmathsetmacro{\l}{#3} 
	      \pgfmathparse{int(\l)}
	      \fill[black] (0.01,0.01) rectangle (0.99,0.99); 
	      \ifnum\pgfmathresult=0
	      
	      \else
	      \foreach \k in {1,...,\l}{ 
		\pgfmathsetmacro{\n}{pow(\b,\k)-1}
		\pgfmathsetmacro{\t}{1/(pow(\b,\k))}
		\begin{scope}[scale=\t]
		  \foreach \i in {0,...,\n}{
		  \foreach \j in {0,...,\n}{
			\pgfmathparse{(rand+1)/2>\p ? int(1) : int(0)} 
			\ifnum\pgfmathresult=1 
			\fill [white] (\i,\j) rectangle ++(1,1); 
			\fi
		  }}
		\end{scope}
	      }
	      \fi
}
\numberwithin{equation}{section}
\theoremstyle{plain}
\newtheorem{theorem}{Theorem}[section]
\newtheorem{proposition}[theorem]{Proposition}
\newtheorem{lemma}[theorem]{Lemma}
\theoremstyle{remark}
\newtheorem{remark}[theorem]{Remark}
\newtheorem{remarks}[theorem]{Remarks}
\newtheorem*{ack}{Acknowledgement}
\theoremstyle{definition}
\newcommand{\PP}{\mathcal{P}}
\newcommand{\CC}{\mathcal{C}}
\newcommand{\QQ}{\mathcal{Q}}
\newcommand{\R}{\mathbb{R}}
\newcommand{\Z}{\mathbb{Z}}
\newcommand{\N}{\mathbb{N}}
\newcommand{\EE}{\mathbb{E}}
\newcommand{\iii}{\mathtt{i}}
\newcommand{\jjj}{\mathtt{j}}
\newcommand{\kkk}{\mathtt{k}}
\newcommand{\ppp}{\mathtt{p}}
\newcommand{\mmm}{\mathtt{m}}
\newcommand{\roo}{\varrho}
\renewcommand{\epsilon}{\varepsilon}
\newcommand{\BB}{\mathcal{B}}
\DeclareMathOperator{\dimh}{dim_H}
\DeclareMathOperator{\dima}{dim_A}
\DeclareMathOperator{\cdima}{\mathcal{C}dim_A}
\DeclareMathOperator{\cdimh}{\mathcal{C}dim_H}
\DeclareMathOperator{\gcdimh}{\mathcal{G}\mathcal{C}dim_H}
\DeclareMathOperator{\diam}{diam}
\begin{document}

\title{Fractal percolation and quasisymmetric mappings}

\author{Eino Rossi}
\address{
        Department of Mathematics and Statistics, University of Helsinki \\
        P.O. Box 68  (Pietari Kalmin katu 5) \\
        00014 University of Helsinki, Finland}
\email{eino.rossi@gmail.com}

\author{Ville Suomala}
\address{
        Department of Mathematical Sciences, University of Oulu\\
        P.O. Box 8000 (Pentti Kaiteran katu 1)\\ 
        FI-90014 University of Oulu, Finland}
\email{ville.suomala@oulu.fi}

\thanks{ER acknowledges the supports of CONICET, the Finnish Academy of Science and Letters, Mittag-Leffler institute, and the University of Helsinki via the project
Quantitative rectifiability of sets and measures in Euclidean Spaces and Heisenberg groups (project No.7516125)\\
VS acknowledges support from the Academy of Finland CoE in Analysis and Dynamics research.\\
We thank the Mittag-Leffler institute and the organizers of the  Fractal Geometry and Dynamics program, where this project started. VS also acknowledges the Finnish Academy of Science and Letters for covering the costs of the visit.
}

\subjclass[2010]{Primary 28A80, 30C65, 60D05}
\keywords{fractal percolation, quasisymmetric mapping, conformal dimension, Galton-Watson process}
\date{\today}

\begin{abstract}
  We study the conformal dimension of fractal percolation and show that, almost surely, the conformal dimension of a fractal percolation is strictly smaller than its Hausdorff dimension.
\end{abstract}

\maketitle

\section{Introduction}
Dropping the dimension of a metric space $X$ by a quasisymmetric mapping is a popular and challenging question in analysis. Especially, this question has been studied for many deterministic fractal sets, such as  Sierpi\'nski carpets and other self-similar sets, and also Bedford-McMullen carpets and other self-affine sets \cite{MackayTyson2010,DavidSemmes1997,TysonWu2006,Mackay2011,KaenmakiOjalaRossi2018}. In this note, we study the conformal dimension for certain random sets. Given a random set $E\subset\R^d$ with almost sure Hausdorff dimension $s\in (0,d]$, it is natural to ask if $E$ is almost surely minimal for conformal dimension, and if not, what is the ``almost sure" or ``expected" conformal dimension of $E$.
We are not aware of any results of this kind for continuous random sets, but related embedding questions have been investigated for discrete sets, especially for Lipschitz embeddings of independent Bernoulli random sets in $\Z$ and $\Z^d$, see \cite{BasuSly2014, BSS2018}.
\subsection{Conformal dimension}
A homeomorphism $\eta\colon [0,\infty) \to [0,\infty)$ is called a control function. A homeomorphism $f$ between metric spaces $(X,d)$ and $(Y,\roo)$ is called $\eta$-quasisymmetric 
if
\begin{equation}
 \label{eq:qs_def}
 \frac{ d( f(x) , f(y) ) }{ d( f(x) , f(z) ) } \leq \eta \left( \frac{ \roo(x,y) }{ \roo(x,z) } \right)
\end{equation}
holds for all $x,y,z \in X$ with $x\neq z$. We say that $f$ is quasisymmetric (qs for short), if it is $\eta$-quasisymmetric for some control function $\eta$. Quasisymmetric mappings preserve certain geometric properties like doublingness, uniform perfectness and so on, but unlike Lipschitz mappings they may distort common notions of dimension such as Hausdorff dimension. For standard properties of quasisymmetric maps we refer to \cite{TukiaVaisala1980,Heinonen2001,MackayTyson2010}.

If $f\colon X \to Y$ is $\eta_1$-quasisymmetric and $g\colon Y \to Z$ is $\eta_2$-quasisymmetric, then $g \circ f$ is $\eta_2 \circ \eta_1$-quasisymmetric and  $f^{-1}$ is $\eta'$-quasisymmetric, with $\eta'(t) = 1/\eta_1^{-1}(1/t)$. Therefore metric spaces $X$ and $Y$ are said to be quasisymmetrically-equivalent if there is a quasisymmetric $f\colon X \to Y$. Given a metric space $(X,d)$ a natural question is to describe the spaces that are quasisymmetrically-equivalent to $X$. This is known as the quasisymmetric uniformization problem. For example, a metric space is quasisymmetrically-equivalent to the middle thirds Cantor set if and only if it is compact, doubling, uniformly perfect, and uniformly disconnected \cite[Theorem 15.11]{DavidSemmes1997}. For more on quasisymmetric uniformization, we refer to the survey \cite{Bonk2006}.

The conformal (Hausdorff) dimension is a natural quasisymmetric invariant of a metric space. For a metric space $X$, its conformal dimension, $\cdimh X$, is the infimum of Hausdorff dimensions $\dimh Y$, among spaces that are qs-equivalent to $X$. That is
\[
 \cdimh X = \inf\{ \dimh f(X) : f\colon X \to f(X) \text{ is quasisymmetric}\,\}.
\]
If $X\subset \R^d$, then it is relevant to restrict to quasisymmetries $f\colon\R^d\to\R^d$.
 This leads to the definition of global conformal dimension: For $X\subset \R^d$, we define its global conformal dimension as
\[
 \gcdimh X = \inf \{ \dimh f(x) : f\colon \R^d \to \R^d \text{ is quasisymmetric}\,\}.
\]
(Recall that a homeomorphism $f\colon \R^d \to \R^d$ is a quasisymmetry if and only if it is quasiconformal). One can also consider the corresponding conformal dimensions for other notions of dimension (such as Assouad dimension, box-counting dimension, packing dimension), but we focus on the Hausdorff dimension. If $\dimh X = \cdimh X$ (resp. $\gcdimh X=\dimh X$), then we say that $X$ is minimal for (global) conformal dimension. 

\subsection{Conformal dimension of self-similar and self-affine sets}
Since self-similar sets satisfying the strong separation condition are uniformly disconnected (and uniformly perfect and compact), they are all qs-equivalent and thus have conformal dimension zero. Without the strong separation condition, the situation is very different. A model example is the classical Sierpi\'nski carpet $S_{3}\subset \R^2$ (see \cite[Section 4.3]{MackayTyson2010}). The carpet $S_3$ is obtained by dividing the unit square into $9$ equal sub-squares, removing the middle one and continuing the process in the remaining 8 squares iteratively. 
The resulting set $S_3$ is a self-similar set that satisfies the open set condition. It can be shown that $1 + \log_3 2 \leq \cdimh S_{3} < \dimh S_{3} = \log_3 8$, but the exact value of $\cdimh S_{3}$ is unknown. (For the strongest claimed bounds, see \cite{Kwapisz2017}.) The bound $\cdimh S_{3} < \dimh S_{3}$ follows by an Assouad dimension estimate (see \cite{Luukkainen1998} for the definition and properties of the Assouad dimension): It holds that $\dimh S_{3} = \dima S_{3}$, and, by passing to certain tangent sets and using  modulus estimates that we do not review here, it follows that $\cdima S_{3} < \dima S_{3}$ (see \cite[Example 6.2.3]{MackayTyson2010}). One can consider the same construction with any odd integer $p$ instead of $3$ dividing the unit square into $p^2$ equal squares of side-length $p^{-1}$ and removing the middle one. This results into a central carpet $S_p$, and one can deduce the corresponding results for $S_p$. It was shown by Bonk and Merenkov \cite{BonkMerenkov2013}, that central carpets $S_p$ and $S_q$ are never qs-equivalent for $p\neq q$.

In the above example it was crucial that $\dimh S_3 = \dima S_3$. This is a common property among self-similar sets satisfying a reasonable separation condition, for example, the open set condition. Self-affine sets $S$, on the other hand, typically satisfy $\dimh S < \dima S$ and thus a similar method does not work. However, it is still possible to study their conformal dimension. A Bedford-MacMullen carpet $S$ is obtained using a similar process as the central carpets $S_p$ except that the unit square is divided into $n\times m$ congruent sub-rectangles ($n>m$) and some of them are removed according to a given pattern which stays the same during the process. Mackay \cite{Mackay2011} verified the following dichotomy: If the retaining pattern contains an empty row and none of the rows is full, then $\cdima S = 0$ (and thus trivially also $\cdimh S =0$), otherwise $\cdima S = \dima S$. A similar in spirit, but incomparable class of of self-affine sets was recently studied by K\"aenmaki, Ojala, and Rossi \cite{KaenmakiOjalaRossi2018}.

\subsection{Main result}
In this paper, we focus on a well known family of random fractal sets, the fractal percolation. It is defined on the unit cube $[0,1]^d$ via two parameters $M\in\N_{\ge3}$ and $0<p<1$. We divide the unit cube into $M^d$ sub-cubes. Each of the sub-cubes survives with probability $p$ and perishes with probability $1-p$ independently of the other cubes. This process is then iterated in the surviving sub-cubes of all generations. The fractal percolation set $E$ consists of the points that survive all stages of the construction.  We recall the definition and the basic properties of fractal percolation in Section \ref{sec:fractalpercolation}. The main result of this paper implies that almost surely on non-extinction, fractal percolation is not minimal for conformal Hausdorff dimension:

\begin{theorem}
\label{thm:notminimal}
 Let $E=E_{M,p}(\omega)$ be the fractal percolation set in $\R^d$ and $p\in(0,1)$. Then there exists $t=t(d,M,p)$ so that $\cdimh E = t < \dimh E$ almost surely conditioned on the event $E\neq \emptyset$.
\end{theorem}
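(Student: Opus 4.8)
The plan is to establish the two halves of the statement separately: (a) that $\cdimh E$ equals, $\mathbb{P}(\cdot\mid E\ne\emptyset)$-almost surely, a deterministic constant $t$, and (b) that $t<\dimh E$. Recall that $\dimh E=\log_M(pM^d)=d+\log_M p$ almost surely conditioned on $E\ne\emptyset$ (classical; see e.g.\ Hawkes or Falconer), and that the statement is vacuous unless $pM^d>1$, since $E=\emptyset$ almost surely when $pM^d\le1$; so I assume $pM^d>1$. I index the construction cubes by the random subtree of the $M^d$-ary tree of surviving cubes, writing $Q_v$ for the cube attached to a vertex $v$ and $|v|$ for its generation.

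\emph{Almost sure constancy.} Here I only use that $\cdimh$ is monotone under inclusion and invariant under similarities. Splitting off the first generation, $E=\bigcup_j\psi_j(E_j)$, where $j$ runs over the surviving level-one cubes, $\psi_j$ is the similarity of $[0,1]^d$ onto the $j$-th cube, and the $E_j$ are i.i.d.\ copies of $E$, independent of the number $N\sim\mathrm{Bin}(M^d,p)$ of surviving cubes. Then $\cdimh E\ge\max_j\cdimh E_j$, so the distribution function $G$ of $\cdimh E$ satisfies $G(s)\le\mathbb{E}[G(s)^N]=(pG(s)+1-p)^{M^d}=:g(G(s))$ for all $s$. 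Since $g$ is strictly convex and increasing with $g(1)=1$, $g'(1)=pM^d>1$, and a unique further fixed point $q\in(0,1)$ (the extinction probability), one has $g(x)<x$ on $(q,1)$; hence $G(s)\le g(G(s))$ forces $G(s)\in[0,q]\cup\{1\}$. As $G$ is non-decreasing, right-continuous, with $G(\infty)=1$ and $G(s)\ge\mathbb{P}(E=\emptyset)=q$ for $s\ge0$, it follows that $G\equiv q$ on $[0,t)$ and $G\equiv1$ on $[t,\infty)$ for some $t\ge0$. Then $\{\cdimh E<t\}$ and $\{E=\emptyset\}$ both have probability $q$ and the latter is contained in the former, so they agree almost surely; thus $\cdimh E=t$ almost surely on $\{E\ne\emptyset\}$.

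\emph{Strict inequality.} Since $t$ is deterministic and $\dimh E=d+\log_M p$ almost surely on non-extinction, it suffices to exhibit, on an event of positive probability, a quasisymmetric homeomorphism $f$ of $\R^d$ with $\dimh f(E)<d+\log_M p$; then $t=\cdimh E\le\dimh f(E)<d+\log_M p$ on that event, whence the inequality holds everywhere. The map $f$ will be the identity of $\R^d$ equipped with a length metric $\rho_w$ built from a weight $w=w(\omega)$ adapted to $E$: at a gap point $\xi\notin E$ seen at scale $r\sim\dist(\xi,E)$ one sets $w(\xi)$ small or close to $1$ according as the local density of $E$ near $\xi$ at scale $r$ is low or high, so that $\rho_w$ contracts $E$ strongly on its statistically sparse parts and hardly at all on the (rare) dense parts. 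Estimating the $\rho_w$-diameters $\delta_v$ of the cubes $Q_v$ and exploiting that a typical $Q_v$ has strictly fewer than $M^d$ surviving children — so the expected logarithmic contraction per generation is strictly positive — one obtains, for a suitable contraction strength, that $\mathbb{E}\sum_{|v|=n}\delta_v^{\,\alpha}$ decays geometrically in $n$ for some $\alpha<d+\log_M p$; Borel--Cantelli and the covering $\{Q_v:|v|=n\}$ of $E$ then give $\dimh(E,\rho_w)\le\alpha$, in fact almost surely on $\{E\ne\emptyset\}$.

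\emph{Main obstacle.} The crux is to make $w$ a genuine (doubling, indeed strong-$A_\infty$-type) weight, i.e.\ to guarantee that the identity $(\R^d,|\cdot|)\to(\R^d,\rho_w)$ is quasisymmetric. Because the local density of $E$ fluctuates independently across scales and across neighbouring cubes, the naive choice of a fixed power of $\dist(\cdot,E)$ fails: it over-contracts $E$ relative to distant reference points and is not quasisymmetric. The remedy is to let the contraction factor at scale $r$ depend on the density measured over a window of size a fixed multiple of $r$, so that it varies slowly in space at each scale; with this in place one still needs a moderate-deviation estimate controlling the contribution of atypically dense branches (along which little contraction occurs) to the sum $\sum_{|v|=n}\delta_v^\alpha$, plus routine measurability checks for the $0$--$1$ argument. (A tempting alternative — proving directly that $E$ is quasisymmetrically equivalent to a fractal percolation with a smaller parameter — looks harder, since $E$ is almost surely neither uniformly perfect nor uniformly disconnected.)
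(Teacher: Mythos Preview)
Your part (a) is correct and is essentially the same ``inherited event'' zero--one law the paper uses (Proposition~\ref{thm:almostsureCdim}), just unpacked via the generating function of the offspring distribution rather than quoted from Lyons--Peres. The monotonicity of $\cdimh$ under inclusion that you invoke is indeed valid for the intrinsic conformal dimension (restriction of a quasisymmetry is a quasisymmetry with the same control function), so no issue there.

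Part (b), however, is a plan, not a proof. You propose to lower the dimension by passing to a length metric $\rho_w$ for a weight $w$ adapted to the local density of $E$, and you correctly identify the central difficulty: one must show that $w$ is of strong-$A_\infty$ type so that $(\R^d,|\cdot|)\to(\R^d,\rho_w)$ is quasisymmetric. But you do not define $w$, you do not carry out the doubling/quasisymmetry verification, and you defer the needed large-deviation control to ``a moderate-deviation estimate'' that is neither stated nor proved. Since $E$ is almost surely not uniformly perfect and its local density fluctuates wildly across scales, none of these steps is routine; in particular, a weight that contracts aggressively at sparse scales will in general fail the Harnack-type comparability across neighbouring cubes that quasisymmetry requires, and your ``window-averaging'' fix is only a heuristic. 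As written, nothing here establishes the existence of a single quasisymmetry with $\dimh f(E)<\dimh E$, so the strict inequality is not proved.

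The paper avoids the weight machinery entirely. It defines the map combinatorially on the coding tree: whenever all boundary sub-cubes of a surviving cube $Q_{\iii|_{n-1}}$ die, the code is rewritten by inserting a fixed inner word $\eta$, which has the geometric effect of pushing $E\cap Q_{\iii|_{n-1}}$ into the small cube $Q_{\iii|_{n-1}\eta}$. This yields an explicit map $f\colon E\to F\subset\R^d$. The dimension drop (Lemma~\ref{lem:dim_drop}) is a one-step martingale computation on $\sum_{\iii\in T_n}\diam(Q_{\widetilde{\iii}})^s$, with a clean contraction factor $\kappa(s,K)<1$; no deviation estimates are needed. Quasisymmetry (Lemma~\ref{lem:qs_eq}) is proved by a direct distance estimate (Proposition~\ref{prop:main_geom}): because the insertion only occurs when the boundary layer is empty, $f$ moves points by at most a fixed fraction of their distance to the relevant cube boundary, which gives the two-sided bound $|f(x)-f(y)|\approx M^{|\kkk|-|\widetilde{\kkk}|}|x-y|$ and a power-type control function. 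If you want to rescue your approach, you would need to produce a concrete weight and prove its strong-$A_\infty$ property; alternatively, follow the paper's substitution construction, which makes the quasisymmetry verification elementary.
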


It is well known that, conditioned on non-extinction, the Hausdorff dimension of fractal percolation obtains a constant value almost surely (see \eqref{eq:dim_H_fp} below). By an application of a zero-one law (see Proposition \ref{thm:almostsureCdim}), also the conformal dimension of the fractal percolation takes a constant value, almost surely conditioned on non-extinction. Thus the task is to show that, conditioned on non-extinction, one can almost surely drop the dimension by a quasisymmetry. The random structure of the fractal percolation implies that fractal percolation does not satisfy uniform disconnectedness nor uniform perfectness, so unlike for well separated self-similar sets, there are no obvious ways to drop the dimension via quasisymmetric maps. Also, it almost surely holds that $\dimh E < \dima E = \cdima E = d$, so the methods from central carpets do not work either.

We prove Theorem \ref{thm:notminimal} in Section \ref{sec:cd} by constructing a quasisymmetry $f_\omega \colon E \to F$ for each realization $E = E(\omega)$ of the fractal percolation process, where $F=F(\omega)$ is a (random) subset of $\R^d$, and then showing that $\dim_H F<\dim_H E$ almost surely.  The main idea is quite simple: Suppose that $Q$ is an $M$-adic cube that survives the percolation. There is a fixed positive probability, independent of $Q$, that all the sub-cubes of $Q$ on the boundary die out. If this happens, it is possible to shrink the inner part of the cube by a contractive map. Iterating this over surviving cubes of all generations, yields a quasisymmetric map defined on $E$. We will show that around most points of $E$, this shrinking happens on so many scales that the induced quasisymmetry will decrease the Hausdorff dimension of the fractal percolation set $E$.   

In Section \ref{sec:gcd}, we prove Theorem \ref{thm:glob}, the global version of Theorem \ref{thm:notminimal}, by extending $f$ to a quasisymmetry $\R^d\to\R^d$.

\begin{ack}
 We thank John Mackay and the referees for useful comments.
\end{ack}

\section{Fractal percolation}\label{sec:fractalpercolation}

We recall the definition of fractal percolation in $\R^d$ as follows: To begin with, we fix a parameter $M\in\{3,4,\ldots\}$ and let $\Lambda=\{1,\ldots,M^d\}$. We first define a labelling of the $M$-adic sub-cubes of the unit cube $[0,1]^d$ using the alphabet $\Lambda$. We denote by $\QQ_n$ the family of closed $M$-adic sub-cubes of level $n$ of the unit cube $[0,1]^d$,
\[
 \QQ_n=\left\{\prod_{l=1}^d[i_l M^{-n},(i_{l}+1)M^{-n}]\,:\,0\le i_l\le M^{n}-1\right\}
\]
and let $\QQ=\cup_{n\in\N}\QQ_n$. Write $Q'\subset_N Q$ if there is $n\in\N$ such that $Q\in\QQ_n$, $Q'\in\QQ_{n+N}$ and $Q'\subset Q$.
Let us label the elements of $\QQ_1$ by the set $\Lambda=\{1,\ldots,M^d\}$ such that the symbols 
\[i\in\Lambda_B:=\{1,\ldots,M^d-(M-2)^d\}\] 
correspond to the cubes intersecting the boundary of $[0,1]^d$.
We fix this labelling throughout the paper and given any $Q\in\QQ_n$, we label the cubes $Q'\subset_1 Q$ using the same labelling based on the geometric location of $Q'$ inside $Q$. This induces a natural labelling of $\QQ_n$ by $\Lambda^n$.
\begin{figure}
\centering
\begin{tikzpicture}[scale=4]
 \fractalpercolation{0.7}{3}{1}{5}
\begin{scope}[shift={(1.3,0)}]
 \fractalpercolation{0.7}{3}{2}{5}
\end{scope}
\begin{scope}[shift={(2.6,0)}]
 \fractalpercolation{0.7}{3}{3}{5}
\end{scope}
%
\end{tikzpicture}
\label{fig:prcolation}
\caption{The first three stages of a sample of the fractal percolation process in dimension $2$ with parameters $p=0.7$ and $M=3$.}
\end{figure}
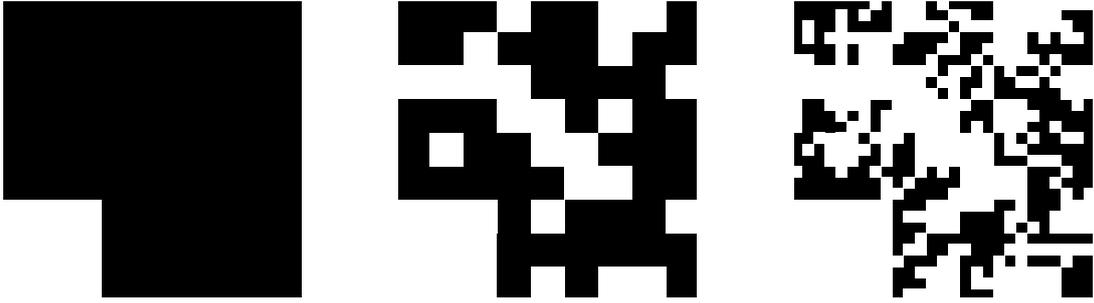

The main reason for the assumption $M > 2$ is that for $M=2$, $\Lambda_B=\Lambda$ since every cube touches the boundary of its parent. We leave it to the interested reader to check that all our results also hold for $M=2$. Basically, this reduces to the situation $M=4$ by gluing together two consecutive steps in the process.

We will use notations $\iii=i_1i_2\ldots,\jjj=j_1j_2\ldots$, etc. for words $\iii,\jjj\in\Lambda^{\N}$ and also for finite words $\iii,\jjj\in\Lambda^*=\bigcup_{n\in\N}\Lambda^n$. We denote by $\sigma$ the left shift, $\sigma(i_1i_2i_3\ldots)=i_2i_3\ldots$. If $\iii\in\Lambda^*$, we let $|\iii|$ stand for its length. The notation $\iii\prec\jjj$ means that $\jjj=\iii$ or  $\jjj=\iii\jjj'$ for some $\jjj'\in\Lambda^{\N}\cup\Lambda^*$ (Note that this is equivalent to saying that $Q_{\jjj}\subset_N Q_{\iii}$ for some $N\ge 1$ whenever $\jjj$ has finite length). By $\iii\wedge\jjj$ we mean the longest common beginning of the two words $\iii$ and $\jjj$. If $|\iii|\ge n$, let $\iii|_n=i_1\ldots i_n$ and $[\iii|_n] = \{\jjj\in\Lambda^\N : \jjj|_n = \iii|_n\}$. We say that a collection $\Lambda'\subset\Lambda^*\cup\Lambda^\N$ of words is incomparable if there are no words $\iii,\jjj\in\Lambda'$ such that $\iii\neq\jjj$ and $\iii\prec\jjj$.

Since $\QQ_n$ is in one to one correspondence with the alphabet $\Lambda^n$, we may define a natural projection $\Pi\colon\Lambda^\N\to[0,1]^d$ by setting
\[
 \Pi(\iii)=x_\iii\,,\text{ where }\{x_\iii\}=\bigcap_{n\in\N}Q_{\iii|_n}\,.
\]
Equivalently, $\Pi(\iii) = \lim_{n\to\infty} h_{ Q_{\iii|_n} }(0)$, where $h_Q$ is the homothety sending $[0,1]^d$ to $Q$. Given $0<p<1$, the fractal percolation set $E(\omega)$ is then defined by
\[E(\omega):=\Pi(T(\omega))\,,\]
where
\[T(\omega)=\{\iii\in\Lambda^\N\,:\,X_{\iii|_n}=1\text{ for all }n\in\N\}\,,\]
and $X_\iii$, $\iii\in\Lambda^*$ are independent Bernoulli random variables each taking value $1$ with probability $p$.
We denote by $\PP$ the law of the fractal percolation process. This is the unique Borel probability measure on $\{0,1\}^{ \Lambda^\N }$ satisfying
\[\PP\left(\prod_{n=1}^{|\iii|} X_{\iii|_n}=1\right) = p^{|\iii|} \text{ for all }\iii\in\Lambda^*\,.\]
It is well known that if $p>M^{-d}$, then almost surely, conditional on non-extinction (that is, $E\neq \varnothing$), it holds that
\begin{equation}\label{eq:dim_H_fp}
\dimh(E)=d+\frac{\log p}{\log M}\,.
\end{equation}
On the other hand, if $p\le M^{-d}$ then $E$ is almost surely empty, see for example \cite[Theorem 3.7.1]{BishopPeres2017}.

Let us also denote by $T_n\subset\Lambda^n$ the (random) set of $n$-words $\iii=i_1\ldots i_n$ such that
\[
 X_{\iii|_k}=1\text{ for all }k=1,\ldots,n\,.
 \]
By $\mathcal{B}_n$, we denote the sigma-algebra generated by  $\{X_{\iii}\,:\,|\iii|\le n\}$. In other words, $\BB_n$ is the sigma-algebra generated by the labelled finite trees $T_n \in \{0,1\}^{\Lambda^n}$.

Note that $T_n(\omega)$, $n\ge 1$ and $T(\omega)$ can be interpreted as trees by considering an edge between the vertices $\iii$ and $\jjj$ if $\jjj\in T_{|\jjj|}$,  $\iii\in T_{|\iii|}$ and either $\jjj = \iii k$ or $\iii = \jjj k$ for some $k\in\Lambda$. For convenience, we extend the natural projection for finite words as well. For $\iii\in\Lambda^*$, it holds that $\Pi([\iii]) = Q_\iii$, and so we set $\Pi(\iii) = h_{Q_\iii}(0)$. With this notation, we have the identity
\begin{equation}
 \label{eq:Pi_decomposition}
 \Pi(\iii\jjj) = \Pi(\iii) + M^{-|\iii|}\Pi(\jjj) = h_{Q_\iii}(\Pi(\jjj))
\end{equation}
for $\iii\in\Lambda^*$ and $\jjj\in \Lambda^\N \cup \Lambda^*$.

\begin{remark}\label{rem:sss}
The fractal percolation process is stochastically self-similar in the sense that conditional on $\iii\in T_n$, the sub-tree of $T$ rooted at $\iii$ has the same law as the original tree $T$. Transferred into $[0,1]^d$, this means that, conditional on $\iii\in T_n$, the set
\[
 h^{-1}_{Q_\iii}( \Pi(T(\omega) \cap [\iii]))
\] 
has the same law as $E$, for all $k\in\N$. Moreover, if $\Lambda'\subset\Lambda^*$ is a (finite) collection of incomparable words, the random sets $h^{-1}_{Q_\iii}(\Pi( T(\omega)\cap[\iii] ))$
are independent conditional on the event
\[
 \iii\in T_{|\iii|}\text{ for all }\iii\in\Lambda'\,.
\]
Note that $ \Pi(T(\omega) \cap [\iii] )$ is almost the same as $E\cap Q_\iii$, expect that $\partial Q_\iii \cap E$ may contain points of $E$ of the form $\Pi(\jjj)$ for some $\jjj\not\in [\iii|_n]$.	
\end{remark}

\begin{remarks}
\emph{a)} The fractal percolation set $E$ is a model example of a random fractal constructed from a Galton-Watson tree. In addition to fractal percolation, our main results can be extended to many other Galton-Watson random fractals, see Remark \ref{rem:final_remarks}.

\emph{b)} Many properties of the fractal percolation sets can be deduced from corresponding properties of the underlying Galton-Watson tree. For instance, the Hausdorff dimension of $E(\omega)$ can be directly deduced from the branching number of the tree $T(\omega)$, see \cite[\S 1.10]{LyonsPeres2016}. When it comes to conformal dimension, there is in general no relation between $\CC\dim E(\omega)$ and $\CC\dim(T(\omega))$; More precisely, a natural way to metrize  $T(\omega)$ and its boundary $\partial T(\omega)\subset \Omega^\N$ is to consider $0<\kappa<1$ and define $d_\kappa(\iii,\jjj)=\kappa^{|\iii\wedge\jjj|}$. Since for all $0<\kappa,\kappa'<1$, the spaces $(\partial T(\omega),d_\kappa)$ and $(\partial T(\omega),d_{\kappa'})$ are quasisymmetrically equivalent (the identity map yielding the obvious quasisymmetry), it follows that $\cdimh(T(\omega),d_\kappa)=0$.
\end{remarks}

\section{Conformal dimension of fractal percolation}\label{sec:cd}

We first observe that the conformal dimension of $E=E(\omega)$ is constant almost surely on $E\neq\varnothing$.

\begin{proposition}
 \label{thm:almostsureCdim}
 There exists $t=t(p)$, such that
 \[\CC\dimh E(\omega) = t\,,\]
 almost surely, conditioned on $E(\omega)\neq\varnothing$.
\end{proposition}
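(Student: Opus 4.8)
The plan is to prove a zero--one law for $\cdimh E$ from the stochastic self-similarity of the process (Remark~\ref{rem:sss}) together with two soft properties of conformal dimension. I will use that $\cdimh$ is monotone under set inclusion (the restriction of a quasisymmetry to a subset is a quasisymmetry, and $\dimh$ is monotone) and invariant under similarities of $\R^d$ (a similarity is bi-Lipschitz, hence preserves both $\dimh$ and quasisymmetric equivalence classes). I will also take $\omega\mapsto\cdimh E(\omega)$ to be a random variable; justifying this measurability is the one point that needs genuine care, and it is what lets us speak of essential suprema and apply convergence theorems below. If $p\le M^{-d}$ then $E=\varnothing$ almost surely and there is nothing to prove, so assume $p>M^{-d}$, whence $\PP(E\neq\varnothing)>0$.

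Next I would set up the self-similar decomposition. Since every $\iii\in T$ satisfies $\iii|_n\in T_n$, for each $n$ we have the set identity $E=\bigcup_{\jjj\in T_n}\Pi(T\cap[\jjj])$, and by \eqref{eq:Pi_decomposition} each piece equals $h_{Q_\jjj}(E^{(\jjj)})$, where $E^{(\jjj)}=\Pi(\{\kk:\jjj\kk\in T\})$ is the percolation set generated by the subtree of $T$ rooted at $\jjj$. By Remark~\ref{rem:sss} --- and since $\BB_n$ carries no information about the process strictly below level $n$ --- conditionally on $\BB_n$ the sets $E^{(\jjj)}$, $\jjj\in T_n$, are independent, each with the law of $E$. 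As $h_{Q_\jjj}$ is a similarity, monotonicity and similarity-invariance give $\cdimh E\ge\cdimh E^{(\jjj)}$ for every $\jjj\in T_n$.

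With this in hand, the zero--one argument runs as follows. Let $t=t(p)$ be the essential supremum of $\cdimh E$ with respect to $\PP(\,\cdot\mid E\neq\varnothing)$; by definition $\cdimh E\le t$ almost surely on $\{E\neq\varnothing\}$, and we may assume $t>0$ (otherwise $\cdimh E\equiv 0$ there and we are done). Fix $0\le t'<t$; then $p':=\PP(\cdimh E>t')\ge\PP(E\neq\varnothing)\,\PP(\cdimh E>t'\mid E\neq\varnothing)>0$. Conditioning on $\BB_n$ and using the conditional independence of the $E^{(\jjj)}$ together with $\cdimh E\ge\cdimh E^{(\jjj)}$,
\[
 \PP(\cdimh E\le t')\;\le\;\PP\bigl(\cdimh E^{(\jjj)}\le t'\ \text{for all }\jjj\in T_n\bigr)\;=\;\EE\bigl[(1-p')^{|T_n|}\bigr].
\]
The underlying Galton--Watson process has offspring law $\mathrm{Bin}(M^d,p)$ with mean $pM^d>1$, so it is supercritical and $\{E\neq\varnothing\}$ is exactly the event $\{|T_n|\to\infty\}$; hence $(1-p')^{|T_n|}\to\mathbf{1}_{\{E=\varnothing\}}$ almost surely and, by dominated convergence, $\EE[(1-p')^{|T_n|}]\to\PP(E=\varnothing)$. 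Thus $\PP(\cdimh E\le t')\le\PP(E=\varnothing)$, and since trivially $\{E=\varnothing\}\subset\{\cdimh E\le t'\}$ we get $\PP(\cdimh E>t'\mid E\neq\varnothing)=1$. Letting $t'\uparrow t$ along a sequence yields $\cdimh E\ge t$ almost surely on $\{E\neq\varnothing\}$, completing the proof.

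The only real obstacle I anticipate is the measurability of $\omega\mapsto\cdimh E(\omega)$: conformal dimension is an infimum over an uncountable family of quasisymmetries, so its Borel measurability on the space of compact subsets of $\R^d$ is not immediate and must be addressed (alternatively one can run the argument with inner and outer measures in place of essential suprema). Everything else is soft --- the two elementary properties of $\cdimh$, the exact self-similar splitting of $E$, the conditional i.i.d.\ structure from Remark~\ref{rem:sss}, and the standard dichotomy that a supercritical Galton--Watson tree either dies out or has $|T_n|\to\infty$.
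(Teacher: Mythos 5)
Your argument is correct, and it rests on exactly the two soft facts the paper uses as well: restricting a quasisymmetry to a subset is a quasisymmetry (hence $\cdimh$ is monotone under inclusion) and composing with similarities costs nothing, combined with the stochastic self-similarity of Remark \ref{rem:sss}. The only real difference is how the zero--one law is obtained. The paper checks that, for each $\alpha$, the event $\{\cdimh E<\alpha\}$ is \emph{inherited} (it holds for finite trees and passes to all descendant subtrees) and then simply cites the zero--one law for inherited events \cite[Proposition 5.6]{LyonsPeres2016}, setting $t=\inf\{\alpha:\PP(\cdimh E<\alpha)>0\}$. You instead reprove that zero--one law from scratch: taking $t$ to be the conditional essential supremum and running the standard supercritical Galton--Watson argument, namely conditioning on $\BB_n$, using the conditional i.i.d.\ structure of the subtrees rooted at $T_n$, and the fact that $(1-p')^{|T_n|}\to\mathbf{1}_{\{E=\varnothing\}}$ because $|T_n|\to\infty$ almost surely on survival. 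So your route is self-contained where the paper's is a citation, while the probabilistic content is the same; what it buys is independence from the cited result, at the cost of a somewhat longer argument. The measurability of $\omega\mapsto\cdimh E(\omega)$ that you flag is a legitimate point, but it is equally implicit in the paper's proof (there too $\{\cdimh E<\alpha\}$ must be an event for the conclusion to make sense), so it is not a gap specific to your proposal, and your suggested fallback via outer measure, or working on the completed probability space, deals with it.
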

\begin{proof}
	Recall that $E=E(\omega)=\Pi(T(\omega))$. Suppose that $\CC\dimh\Pi(T(\omega))<\alpha$. Then there exists a quasisymmetry $f\colon E\to Y$, where $Y$ is a metric space with $\dimh Y<\alpha$. Given $\iii\in\Lambda^*$, the restriction \[f|_{\Pi(T(\omega)\cap[\iii])}\colon\Pi(T(\omega)\cap[\iii])\to Y\] 
	is also a quasisymmetry (with the same control function). Since scaling preserves quasisymmetry as well, this implies that the event
	\[\CC\dimh\Pi(T(\omega))<\alpha\]
	is inherited, meaning that it holds for all finite trees and if it holds for a given tree $T$, it holds also for all the descendant sub-trees It follows from a standard zero-one law for inherited events (see \cite[Proposition 5.6]{LyonsPeres2016}) that
	\[\PP(\CC\dimh E<\alpha\,|\,E\neq\varnothing)\in\{0,1\}\,.\]
The proposition follows with
$t=\inf\{\alpha\,:\,\PP(\CC\dimh E<\alpha)>0\}$.	
\end{proof}

\begin{remark}
 \label{rem:embeddings}
 The proposition clearly holds for the global conformal dimension as well. (Recall the definition from the introduction.) Moreover, with the same technique, it is also easy to show that for a given (deterministic) $X$, it holds that $$\PP( X \text{ can be quasisymmetrically embedded to } E ) \in\{0,1\}$$ as well as $$\PP( E \text{ can be quasisymmetrically embedded to } X ) \in\{0,1\},$$ since the cases where $E=\varnothing$ are trivial.
\end{remark}

We will now start the proof of Theorem \ref{thm:notminimal} by constructing a (random) mapping $f\colon E\to\R^d$ that witnesses the estimate $\cdimh E<\dimh E$.
Consider a predetermined word of length $K\ge 1$ corresponding to one of the inner cubes, that is $\eta\in\Lambda^K$ such that $\eta_1\notin\Lambda_B$. Given $T(\omega)$, we define a new set $\widetilde{T}(\omega)$ by the following substitution rule:
\[
 \text{ If }\{ \iii|_{n-1} j \} \cap T_{n}(\omega)=\varnothing,\text{ for all $j\in\Lambda_B$, then substitute }i_{n}\to \eta i_{n}\,.
\]
Apply this for all $\iii\in T(\omega)$ and all $n\in\N$ and let $\widetilde{T}(\omega)$ be the resulting subset of $\Lambda^\N$. Let $\widetilde{\iii}$ denote the word obtained from $\iii$ after applying the aforementioned substitution rule for all $n$. In particular, the substitution is not applied iteratively, but it is applied to all $n$ at once: If we denote $\iii = i_1 i_2 i_3 \ldots \in T(\omega)$, then for any $n$, the single letter word $i_n$ becomes a word $\jjj_n\in\{i_n,\eta i_n\}$ in the substitution (depending on $T(\omega)$), and then $\widetilde\iii$ equals to $\jjj_1 \jjj_2 \jjj_3\ldots$. The definition of $\widetilde{\iii}$ clearly extends to finite words $\iii\in\Lambda^*$, as well (the substitution rule is applied for indices $1\le n\le|\iii|$ only). It is understood here that $\iii|_0$ is the empty word $\varnothing$ and, moreover, that $\widetilde\varnothing = \varnothing$. 

We define a map $f\colon E(\omega)\to F(\omega)$ by
\begin{equation}\label{eq:f}
f(\Pi(\iii))=\Pi( \widetilde{\iii} )\,,
\end{equation}
where $F(\omega) = f(E(\omega))$. Since a point $x\in E$ can have multiple representations, we should check that $f$ is well defined (whenever $E(\omega)\neq\varnothing$). So let $x\in E$ and $\iii,\jjj\in \Lambda^\N$ so that $\Pi(\iii) = \Pi(\jjj) = x$, but $\iii\neq\jjj$. We need to show that $\Pi(\widetilde\iii) = \Pi(\widetilde\jjj)$. Write $\kkk=|\iii\wedge\jjj|$, $\iii=\kkk\iii'$, and $\jjj=\kkk\jjj'$. Then $i_n,j_n\in\Lambda_B$ for all $n > |\kkk|+1$, and so either $\widetilde{\iii} = \widetilde{\kkk}\iii'$ and $\widetilde{\jjj} = \widetilde{\kkk}\jjj'$, or $\widetilde{\iii} = \widetilde{\kkk}\eta\iii'$ and $\widetilde{\jjj} = \widetilde{\kkk}\eta\jjj'$. In both cases, recalling \eqref{eq:Pi_decomposition},  it follows that $\Pi(\widetilde{\iii})=\Pi(\widetilde{\jjj})$.
This shows that $f(x)$ is indeed independent of the representation of $x$ as $\Pi(\iii)$. Note also that if $i_n \neq j_n$ for some $n$, then $\widetilde\iii \neq \widetilde\jjj$. This implies that $f$ is
one to one.

Note that due to the underlying random structure, $\widetilde{\iii\jjj}$ is not generally equal to $\tilde\iii\tilde\jjj$. To split the word $\widetilde{\iii\jjj}$ in a way that goes well with our substitution rule, we define
\[
 \prescript{}{\iii}{\widetilde{\jjj}} = \sigma^{|\widetilde{\iii}|}\widetilde{\iii\jjj}\,.
\]
This gives the identities $\widetilde{\iii\jjj} = \widetilde{\iii} \prescript{}{\iii}{\widetilde{\jjj}}$ and 
\begin{equation}
 \label{eq:tilde_splitting}
 \Pi(\widetilde{\iii\jjj})
 =
 \Pi(\widetilde\iii) + M^{-|\widetilde{\iii}|} \Pi( \prescript{}{\iii}{\widetilde{\jjj}} )
 =
 h_{ Q_{\widetilde\iii} } ( \Pi( \prescript{}{\iii}{\widetilde{\jjj}} ) ).
\end{equation}
For $\ppp \in T_n(\omega)$, write 
\begin{equation}\label{eq:f_l}
f_\ppp(\Pi(\iii)) = \Pi( \prescript{}{\ppp}{\widetilde{\iii}} )\,.
\end{equation} 
By \eqref{eq:tilde_splitting} we have
\begin{equation}
 \label{eq:f_commutator}
 f(x) = h_{Q_{\widetilde\ppp}} \circ f_\ppp \circ h^{-1}_{Q_\ppp} (x)
\end{equation}
for all $\ppp \in T_n(\omega)$ and $x \in \Pi([\ppp]) \cap E$. This allows us to separate the common prefix $\ppp$ from the equations, when considering $|f(\Pi(\ppp\iii))-f(\Pi(\ppp\jjj))|$, for example. Note that the mapping $f_\ppp$ is the random mapping that is obtained by replacing the tree $T(\omega)$ by the sub-tree rooted at $\ppp$ in the definition of $f$.

In order to prove Theorem \ref{thm:notminimal}, we need to prove the following lemmas:

\begin{lemma}\label{lem:dim_drop}
$\dim_H F<\dim_H E$ (almost surely, conditioned on non-extinction).
\end{lemma}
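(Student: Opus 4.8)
The plan is to bound $\dim_H F$ from above by building explicit covers of $F$ out of the combinatorics of the substituted tree $\widetilde T(\omega)$ and then estimating the resulting Hausdorff sums in expectation; the quasisymmetry of $f$ plays no role here. First I would record the geometry. Every point of $F=\Pi(\widetilde T(\omega))$ is of the form $\Pi(\widetilde\jjj)$ with $\jjj\in T(\omega)$, and since whether the substitution rule fires at a level $k\le N$ depends only on $\jjj|_{k-1}$ and $T_k$, the finite word $\widetilde{\jjj|_N}$ (meaning: the rule applied to the length-$N$ word $\jjj|_N$) is a prefix of $\widetilde\jjj$; hence $\Pi(\widetilde\jjj)\in Q_{\widetilde{\jjj|_N}}$ and $\jjj|_N\in T_N$. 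So for every $N\ge1$ the cubes $\{Q_{\widetilde\iii}:\iii\in T_N\}$ cover $F$. Each firing of the rule replaces a single letter $i_n$ by $\eta i_n$, adding $K=|\eta|$ to the length, so writing $S_N(\iii)$ for the number of levels $1\le n\le N$ at which the rule fires along $\iii$, we have $|\widetilde\iii|=N+K\,S_N(\iii)$ and $\diam Q_{\widetilde\iii}=\sqrt d\,M^{-(N+K S_N(\iii))}\le\sqrt d\,M^{-N}$; thus these are covers of $F$ with mesh tending to $0$.

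Next, fix $s>0$ and set $X_N:=\sum_{\iii\in T_N}M^{-s(N+K S_N(\iii))}$, so that $\mathcal{H}^s_{\sqrt d\,M^{-N}}(F)\le(\sqrt d)^s X_N$ and hence $\mathcal{H}^s(F)\le(\sqrt d)^s\liminf_N X_N$. I would compute $\EE[X_N]$ exactly. For fixed $\iii\in\Lambda^N$ one has $\PP(\iii\in T_N)=p^N$; and conditionally on $\{\iii\in T_N\}$ the rule fires at level $n$ precisely when all the $|\Lambda_B|=M^d-(M-2)^d$ boundary children of $\iii|_{n-1}$ are dead, which is possible only if $i_n\notin\Lambda_B$, and over those levels the firing events are independent with probability $q:=(1-p)^{M^d-(M-2)^d}$, because they involve pairwise disjoint families of the Bernoulli variables defining $T$, none of which belongs to $\{X_{\iii|_1},\dots,X_{\iii|_N}\}$. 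Consequently $S_N(\iii)$ is, conditionally on $\{\iii\in T_N\}$, binomial with parameters $\bigl(\#\{n\le N:i_n\notin\Lambda_B\},\,q\bigr)$, so with $c:=1-q(1-M^{-sK})\in(0,1)$ we get $\EE\bigl[M^{-sKS_N(\iii)}\bigm|\iii\in T_N\bigr]=c^{\,\#\{n\le N:\,i_n\notin\Lambda_B\}}$. Summing over $\iii\in\Lambda^N$, with each coordinate contributing a factor $|\Lambda_B|+(M-2)^dc$, yields
\[
  \EE[X_N]=\Bigl(p\,M^{-s}\bigl(M^d-(M-2)^d(1-c)\bigr)\Bigr)^{N}=:g(s)^{N}.
\]

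Finally, we may assume $p>M^{-d}$ (otherwise $E=\varnothing$ almost surely and the statement is vacuous). At $s_0:=d+\tfrac{\log p}{\log M}=\dim_H E$ (see \eqref{eq:dim_H_fp}) one has $M^{s_0}=pM^d$, so $g(s_0)=1-M^{-d}(M-2)^d\bigl(1-c(s_0)\bigr)<1$, because $M\ge3$ forces $(M-2)^d\ge1$ and $1-c(s_0)=q\bigl(1-M^{-s_0K}\bigr)>0$. Since $g$ is continuous, $g(s)<1$ for some $s<s_0$; for such $s$, $\EE[X_N]=g(s)^N\to0$, so Fatou's lemma gives $\liminf_N X_N=0$ almost surely, whence $\mathcal{H}^s(F)=0$ and $\dim_H F\le s<s_0$ almost surely. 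Combined with $\dim_H E=s_0$ almost surely on non-extinction (and with $F=\varnothing$ on extinction), this proves the lemma. The step requiring the most care is the conditional-independence bookkeeping: one must verify that, given survival of the path $\iii|_N$, the events ``all boundary children of $\iii|_{n-1}$ die'' really are independent $q$-coins over the levels $n$ with $i_n\notin\Lambda_B$ and are uncorrelated with the conditioning, which reduces to checking that the relevant index sets of Bernoulli variables are pairwise disjoint and disjoint from $\{\iii|_1,\dots,\iii|_N\}$.
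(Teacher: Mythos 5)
Your proof is correct and is essentially the paper's argument: you use the same covers of $F$ by the cubes $Q_{\widetilde\iii}$, $\iii\in T_N$, and your per-level factor $pM^{-s}\bigl(M^d-(M-2)^d(1-c)\bigr)$ is exactly the paper's $pM^{d-s}\kappa(s,K)$ from \eqref{eq:simf}, obtained there by a one-step conditional expectation and here by the (correct) conditional binomial bookkeeping for the firing events along a surviving path. The only difference is the closing step: you compute $\EE[X_N]=g(s)^N$ directly and conclude via Fatou at a subcritical exponent $s<\dimh E$, whereas the paper iterates the same expectation identity and invokes Doob's martingale convergence theorem at the critical exponent $t$ solving $g(t)=1$ --- an inessential variation.
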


\begin{lemma}\label{lem:qs_eq}
The map $f\colon E\to F$ is a quasisymmetry (for all $\omega$).
\end{lemma}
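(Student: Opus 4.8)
\textbf{Proof plan for Lemma \ref{lem:qs_eq}.}

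The plan is to verify the quasisymmetry inequality \eqref{eq:qs_def} directly for $f\colon E\to F$, using the self-similar factorization \eqref{eq:f_commutator}. Fix three distinct points $x=\Pi(\iii)$, $y=\Pi(\jjj)$, $z=\Pi(\kkk)$ of $E$. The key observation is that the relevant quantities depend only on the combinatorics of the words near their branch points. First I would identify the longest common prefix $\ppp$ of $\iii,\jjj,\kkk$ and use \eqref{eq:f_commutator} together with the fact that homotheties (being similarities) trivially respect quasisymmetry: this reduces the general estimate to the case where at least two of the three points have no common prefix, i.e.\ where one of the pairwise branch points is the empty word. So it suffices to compare the diameter-type quantities $|f(x)-f(y)|$ and $|f(x)-f(z)|$ when $\iii\wedge\jjj=\varnothing$ or $\iii\wedge\kkk=\varnothing$, controlling the ratio in terms of $|x-y|/|x-z| \asymp M^{|\iii\wedge\kkk|-|\iii\wedge\jjj|}$.

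The heart of the matter is a two-sided comparison between $|\Pi(\iii)-\Pi(\jjj)|$ and $|\Pi(\widetilde\iii)-\Pi(\widetilde\jjj)|$. Set $m=|\iii\wedge\jjj|$ and $\ppp=\iii\wedge\jjj$. Since $\widetilde{\iii\jjj}=\widetilde\ppp\,\prescript{}{\ppp}{\widetilde{(\ldots)}}$, the words $\widetilde\iii$ and $\widetilde\jjj$ agree exactly on the prefix $\widetilde\ppp$, and then on the next block either both get the extra copy of $\eta$ or neither does; after that they differ. Hence $|\iii\wedge\jjj| - m$ and $|\widetilde\iii\wedge\widetilde\jjj|-|\widetilde\ppp|$ differ by at most $K+1$, and the crucial point is the distortion of scale: $M^{-m}$ versus $M^{-|\widetilde\ppp|}$. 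The substitution inserts a block of length $K$ at level $n$ precisely when all $\Lambda_B$-children of $\iii|_{n-1}$ are absent from the tree; in particular this can only happen when $\iii|_{n-1}$ (hence $\iii|_n$) is in the surviving tree, and the total number of insertions among the first $m$ letters of $\iii$ is some integer $s=s(\iii,m)$ with $0\le s\le m$, so that $|\widetilde\ppp| = m + Ks$. The same count $s$ applies to $\jjj$ up to level $m$ since $\iii$ and $\jjj$ share the prefix $\ppp$ and the substitution rule at level $n\le m$ depends only on $\iii|_{n-1}=\jjj|_{n-1}$ and on the tree $T(\omega)$. The upshot is the uniform two-sided bound
\[
 M^{-(K+1)} \;\le\; \frac{|\Pi(\widetilde\iii)-\Pi(\widetilde\jjj)|}{M^{-|\widetilde\ppp|}} \;\le\; C(d),
\]
and likewise $c(d)\le |\Pi(\iii)-\Pi(\jjj)|/M^{-m}\le C(d)$, using only that two points in the same $M$-adic cube of level $m$ but in different children are at distance comparable to $M^{-m}$.

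Combining these, for the reduced configuration (say $\iii\wedge\jjj=\varnothing$ and $\iii\wedge\kkk=:\ell$) we get
\[
 \frac{|f(x)-f(y)|}{|f(x)-f(z)|} \;\asymp\; \frac{M^{-|\widetilde{\iii\wedge\jjj}|}}{M^{-|\widetilde{\iii\wedge\kkk}|}} \;=\; M^{\,|\widetilde{\iii\wedge\kkk}|-|\widetilde{\iii\wedge\jjj}|} \;\le\; M^{\,K\cdot\#\{\text{insertions in }\iii|_\ell\}} \;\le\; \left(\frac{|x-y|}{|x-z|}\right)^{K+1}\cdot C(d),
\]
where the middle inequality uses that $\widetilde{\iii\wedge\jjj}=\varnothing$ (no insertions can precede an empty common prefix) while $\widetilde{\iii\wedge\kkk}$ has length at most $(K+1)\ell$. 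Reading this off gives a control function of the form $\eta(t)=C t^{1/(K+1)}$ when $t\le 1$, and the reciprocal estimate (obtained by swapping the roles of $y$ and $z$, or equivalently from the general identity that the inverse of an $\eta$-quasisymmetry is quasisymmetric) handles $t\ge 1$; one then takes a single control function dominating both branches. I expect the main obstacle to be purely bookkeeping: making the ``at most $K+1$'' discrepancy in matched insertion-counts between the $\iii$- and $\jjj$-sides completely rigorous when the branch point $\ppp$ itself sits in the middle of an inserted $\eta$-block, and checking the boundary cases where one of $x,y,z$ lies on $\partial Q$ for some cube (so it has two codings) — but Remark \ref{rem:sss} and the well-definedness argument already given for $f$ show the two codings yield the same image, so these cause no real trouble. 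No probability is needed: the estimate is deterministic in $\omega$, with constants depending only on $d,M,K$.
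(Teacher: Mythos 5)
Your reduction to a common prefix via \eqref{eq:f_commutator} and your final exponent $K+1$ match the paper, but the core of your argument rests on a comparison that is false. You claim $c(d)\le |\Pi(\iii)-\Pi(\jjj)|/M^{-m}\le C(d)$ where $m=|\iii\wedge\jjj|$ (``two points in the same $M$-adic cube of level $m$ but in different children are at distance comparable to $M^{-m}$''), and the analogous two-sided bound for the image points, and then you write $|x-y|/|x-z|\asymp M^{|\iii\wedge\kkk|-|\iii\wedge\jjj|}$. The lower bounds here fail: two points of $E$ lying in different children of $Q_{\iii\wedge\jjj}$ can be arbitrarily close to each other across the common face of those children (fractal percolation is not uniformly disconnected, and nothing in the substitution rule prevents such configurations --- indeed, wherever no substitution ever occurs, $f$ is the identity and such pairs exist). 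Consequently neither $|x-y|\gtrsim M^{-|\iii\wedge\jjj|}$ nor $|f(x)-f(z)|\gtrsim M^{-|\widetilde{\iii\wedge\kkk}|}$ holds, and both directions of your key chain of inequalities break down. This is not a bookkeeping issue; it is exactly the difficulty the paper's proof is organized around.

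The paper replaces your word-level comparison by the genuinely metric statement of Proposition \ref{prop:main_geom}: $|f(x)-f(y)|\approx M^{|\kkk|-|\widetilde{\kkk}|}|x-y|$ with $\kkk=\iii\wedge\jjj$, i.e. the true distance $|x-y|$ appears, not $M^{-|\kkk|}$. Its proof goes through boundary points on the segment from $x$ to $y$ and the estimate \eqref{eq:boundary_lip}, whose geometric content is that whenever an insertion happens at level $n$ inside a cube, all boundary children of that cube are dead, so the point lies at distance at least $M^{-n}$ from the cube's boundary while $f$ displaces it by at most $M^{-n+1}$; this is what makes $f$ behave like a similarity near shared faces, where your comparability fails. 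Moreover, even granting a correct version of the distance comparison, your step bounding $M^{K\cdot\#\{\text{insertions}\}}$ by $(|x-y|/|x-z|)^{K+1}$ needs the quantitative link between insertions and separation: in the paper's Case 1, each of the $N$ insertions strictly between the two branch points forces, via \eqref{eq:empty_bdr}, the lower bound $|x-y|\ge M^{-|\kkk|-n-1}\ge M^{-|\kkk|-|\mmm|+N-1}$, which is what substitutes for your (false) $|x-y|\gtrsim M^{-|\iii\wedge\jjj|}$. Without both of these ingredients your proposal does not yield a control function.
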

Before going into the proofs, let us fix some notation for convenience: Let us replace the Euclidean distance by the maximum distance, which we keep denoting by $|x-y| =  \max_{i=1,...,d} \{|x_i-y_i| \}$. Since the maximum distance is bi-Lipschitz equivalent to the Euclidean distance, all properties related to dimensions and quasisymmetries are unaffected by this change. If $A$ and $B$ are variables, we denote $A\lesssim B$ if there is an absolute constant $C<\infty$ (depending only on $M$ and $d$) such that $ A\le C B$. If $A\lesssim B$ and $B\lesssim A$, then we write $A \approx B$.

\begin{proof}[Proof of Lemma \ref{lem:dim_drop}]
For $\iii\in\Lambda^*$, denote $E_\iii = \Pi([\iii] \cap T(\omega) )$. Then $E = \bigcup_{\iii\in\Lambda^n} E_\iii$ for any $n$. Moreover, $f(E_\iii)\subset Q_{\widetilde{\iii}}$ for all $\iii$, 
and thus, $\{Q_{\widetilde{\iii}}\}_{\iii\in T_n}$, $n\in\N$, constitute natural coverings for $F(\omega)$.

Fix $0<s<d$, and define random variables $Y^s_n(\omega) = \sum_{\iii\in T_n(\omega)} \diam\left(Q_{\widetilde{\iii}} \right)^s$. 
We will estimate $\EE\left( Y^s_n(\omega) \,|\,\mathcal{B}_n\right)$.
Fix $\iii\in\Lambda^n$ and condition on $\iii\in T_n$. Using the linearity of expectation,
\begin{align*}
&\EE\left(\sum_{\iii i\in T_{n+1}}\diam(Q_{\widetilde{\iii i}})^s \mid \iii\in T_n \right)\\
&=\diam(Q_{\widetilde{\iii}})^s\sum_{i=1}^{M^d}\PP(\iii i\in T_{n+1}\,|\,\iii\in T_n)\EE\left(\left(\frac{M^{-|\widetilde{\iii  i}|}}{M^{-|\widetilde{\iii}|}}\right)^s\,|\,\iii i\in T_{n+1}\right)\,.
\end{align*}
Let us denote the previous sum, $\sum_{i=1}^{M^d}\ldots$, by $S(\iii)$. Since $S(\iii)$ is invariant under the shift $\sigma (i_1\ldots i_k)=i_2\ldots i_k$ and $\PP(i\in T_1)=p$ for all $i\in\Lambda$, it follows that
\begin{align*}
S(\iii)&=\sum_{i=1}^{M^d}\PP(i\in T_1)\EE\left(M^{-s|\widetilde{i}|}\,|\,i\in T_1\right)\\
&=p\sum_{i\in\Lambda_B}\EE\left(M^{-s|\widetilde{i}|}\,|\,i\in T_1\right)+p\sum_{i\notin\Lambda_B}\EE\left(M^{-s|\widetilde{i}|}\,|\,i\in T_1\right)\,.
\end{align*}

For $i\in\Lambda_B$, we have $\EE\left(M^{-s|\widetilde{i}|}\,|\,i\in T_1\right)=M^{-s}$, whereas for $i\notin\Lambda_B$, 
we obtain (recall that $K=|\eta|$)
\begin{align*}
\EE\left(M^{-s|\widetilde{i}|}\,|\,i\in T_1\right)
&=
M^{-s}\PP(T_1\cap\Lambda_B\neq\varnothing)+M^{-s(K+1)}\PP(T_1\cap\Lambda_B=\varnothing)\\
&=
M^{-s}\left(1-(1-p)^{M^d-(M-2)^d}+M^{-sK}(1-p)^{M^d-(M-2)^d}\right)\\
&=
M^{-s}\left(1-(1-M^{-sK})(1-p)^{M^d-(M-2)^d}\right)\,.
\end{align*}

Let us denote
\[\kappa=\kappa(s,K)=1-\frac{(M-2)^d}{M^d}(1-M^{-sK})(1-p)^{M^d-(M-2)^d}\,,\]
and note that $\kappa<1$ if $s>0$. Based on the above computation, we may write
\begin{align}\label{eq:simf}
\EE\left(\sum_{\iii i\in T_{n+1}}\diam(Q_{\widetilde{\iii i}})^s\,|\,\iii\in T_n\right)=\diam(Q_{\widetilde{\iii}})^spM^{d-s}\kappa(s,K)\,.
\end{align}

Let $t$  be the unique solution to $pM^{d-t}\kappa(t,K)=1$ (note that this is well defined since $t\mapsto p M^{d-t}\kappa(t,K)$ is continuous and strictly decreasing, $p\kappa(d,K)<p<1$, and $p M^{-d}\kappa(0,K)=p M^{-d}>1$). Then
\eqref{eq:simf} yields
\begin{align*}
 \EE\left( Y^t_{n+1}(\omega) \mid T_n(\omega) \right)
 &=
 \sum_{\iii\in T_n(\omega)} \EE\left( \sum_{\iii i\in T_{n+1}(\omega)} \diam(Q_{\widetilde{\iii i}})^t \mid \iii\in T_n(\omega) \right) \\
 &=
 \sum_{\iii\in T_n(\omega)} \diam(Q_{\widetilde{\iii}})^t
  =
 Y^t_n(\omega)\,,
\end{align*}
and thus we have
$
 \EE\left( Y^t_{n+1} \,|\,\mathcal{B}_n\right)
  =
 Y^t_n
$.
By Doob's martingale convergence theorem \cite[Theorem 8.2]{Falconer1997}, 
almost surely $Y_n^t$ converges to a random variable $Y^t$, with $E(|Y^t|)<\infty$, and thus
\[
 \sup_n\sum_{\iii\in T_n}\diam(Q_{\widetilde{\iii}})^t
 =
 \sup_n Y_n^t(\omega)<\infty\,.
\]
and whence $\dim_H F(\omega)\le t$ almost surely. On the other hand, we know from \eqref{eq:dim_H_fp} that almost surely on non-extinction,
\[\dim_H E(\omega)=s\,,\]
where $s$ is the solution of $pM^{d-s}=1$. Using $\kappa(s,K)<1$, we have $t<s$ and whence the claim.
\end{proof}

Before proving Lemma \ref{lem:qs_eq}, we collect the main geometric properties of the map $f$ in the following proposition.

\begin{proposition}\label{prop:main_geom}
Let $x,y\in E$, with $x=\Pi(\iii)$, $y=\Pi(\jjj)$ and let 
$\kkk=\iii\wedge\jjj$. Denote $\iii=\kkk\iii'$ and $\jjj=\kkk\jjj'$.
 Then
\begin{equation}\label{eq:fxfy}
|f(x)-f(y)|\approx M^{-|\widetilde{\kkk}|}|\Pi(\iii')-\Pi(\jjj')|=M^{|\kkk|-|\widetilde{\kkk}|}|x-y|\,.
\end{equation}
\end{proposition}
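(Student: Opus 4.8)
The second equality in \eqref{eq:fxfy} is immediate from \eqref{eq:Pi_decomposition}, which gives $|x-y| = |\Pi(\kkk\iii') - \Pi(\kkk\jjj')| = M^{-|\kkk|}|\Pi(\iii')-\Pi(\jjj')|$ since both points lie in $Q_\kkk$. So the content is the comparison $|f(x)-f(y)| \approx M^{-|\widetilde{\kkk}|}|\Pi(\iii')-\Pi(\jjj')|$. The plan is to reduce to the root via the conjugation formula \eqref{eq:f_commutator}: taking $\ppp = \kkk$ (which lies in $T_{|\kkk|}$ since $x,y\in E$ forces all prefixes of $\iii,\jjj$ to survive), we get $f(x) = h_{Q_{\widetilde{\kkk}}}\circ f_\kkk\circ h_{Q_\kkk}^{-1}(x)$ and likewise for $y$. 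Since $h_{Q_{\widetilde\kkk}}$ is a homothety with ratio $M^{-|\widetilde\kkk|}$ and $h_{Q_\kkk}^{-1}$ scales $x,y$ to $\Pi(\iii'),\Pi(\jjj')$, it remains to show $|f_\kkk(\Pi(\iii')) - f_\kkk(\Pi(\jjj'))| \approx |\Pi(\iii') - \Pi(\jjj')|$, i.e. it suffices to prove the estimate in the special case $\kkk = \varnothing$, $\widetilde\kkk = \varnothing$: namely that for $x'=\Pi(\iii'), y'=\Pi(\jjj')$ with $\iii'\wedge\jjj' = \varnothing$ (different first letters), one has $|f(x') - f(y')| \approx |x'-y'|$ — where now $f$ stands for the map built from the sub-tree rooted at $\kkk$, which has the same structure.

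So the heart of the matter is: if $\iii' = i_1'\iii''$ and $\jjj' = j_1'\jjj''$ with $i_1' \neq j_1'$, show $|\Pi(\widetilde{\iii'}) - \Pi(\widetilde{\jjj'})| \approx |\Pi(\iii')-\Pi(\jjj')|$. For the upper bound: $\widetilde{\iii'}$ starts with either $i_1'$ or $\eta i_1'$, and similarly $\widetilde{\jjj'}$ with $j_1'$ or $\eta j_1'$; in all cases $\Pi(\widetilde{\iii'})$ lies in a level-$1$ or level-$(K+1)$ cube whose first coordinate block is determined by $i_1'$, and these cubes for $i_1'\ne j_1'$ are either in distinct level-$1$ cubes (distance $\approx M^{-1} \lesssim |x'-y'|$ is false in general — need care) ... more precisely, $|\Pi(\widetilde{\iii'})-\Pi(\widetilde{\jjj'})| \le \diam Q_{i_1'} + |{\rm center stuff}| \lesssim M^{-1}$, and since $i_1'\ne j_1'$ forces $|x'-y'|\gtrsim$ nothing a priori — wait, actually $i_1'\neq j_1'$ does NOT force $|x'-y'|$ to be large (adjacent cubes can have points arbitrarily close). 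The right statement must be handled per coordinate: writing things out, $|\Pi(\widetilde{\iii'})-\Pi(\widetilde{\jjj'})|$ and $|x'-y'|$ are both controlled, coordinatewise, by the gap between the relevant level-$1$ intervals plus lower-order terms, and the substitution only rescales the \emph{interior} offset within a cube by a bounded factor $M^{-K}$ while leaving the cube's position in $[0,1]^d$ essentially fixed up to an additive $O(M^{-1})$ which is absorbed. The clean way: bound $|\Pi(\widetilde{\iii'})-\Pi(\widetilde{\jjj'})|$ above and below by $c_1 M^{-1}$ and $c_2 M^{-1}$ times quantities comparable to $M|x'-y'|$...

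Let me restate the intended mechanism more carefully, since the per-coordinate bookkeeping is exactly where the work is. Fix a coordinate $l$. The $l$-th coordinate of $\Pi(\iii')$ is $i_1'^{(l)} M^{-1} + M^{-1}(\text{tail})$ where $i_1'^{(l)}\in\{0,\dots,M-1\}$ is the $l$-th digit of $i_1'$ and $\text{tail}\in[0,1)$, and similarly for $\jjj'$. Under the substitution, $\Pi(\widetilde{\iii'})$ has $l$-th coordinate either the same, or $\eta^{(l)}$-digit-prefix followed by $M^{-K}\cdot(\text{that expression scaled})$... In every case the coordinate stays within the same closed interval $[i_1'^{(l)}M^{-1}, (i_1'^{(l)}+1)M^{-1}]$ or $[(i_1'^{(l)}-1)M^{-1}, (i_1'^{(l)}+2)M^{-1}]$ up to the $\eta$-shift which adds $\eta_1^{(l)}M^{-1}M^{-K}+\dots \in [0, M^{-1}]$, hence the coordinate moves by at most $M^{-1}$. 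Combining across coordinates and using that $i_1'\ne j_1'$ means the level-$1$ cubes differ, one gets $M^{-1}\lesssim \text{(separation of the two modified cubes)} \lesssim M^{-1}$ when the cubes are non-adjacent, and a finer argument when adjacent; in all cases the ratio $|f(x')-f(y')|/|x'-y'|$ is pinned between two positive constants depending only on $M,d,K$. \textbf{The main obstacle} I anticipate is precisely this last point: when the two differing level-$1$ cubes $Q_{i_1'}$ and $Q_{j_1'}$ are adjacent and $x',y'$ sit near their common boundary, $|x'-y'|$ can be tiny, and one must check that the substitution does not separate the images by a non-comparable amount — i.e. that the $\eta$-shift, which is applied independently to the two branches depending on the local tree, cannot push the images apart by $\gtrsim M^{-1}$ while $|x'-y'|\to 0$. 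Resolving this requires observing that the substitution rule at level $1$ is triggered by the \emph{same} event ($T_1 \cap \Lambda_B = \varnothing$) for both branches — since $\iii'$ and $\jjj'$ share the empty prefix, the condition ``$\{i|_0 j\}\cap T_1 = \varnothing$ for all $j\in\Lambda_B$'' is identical — so either both first letters get the $\eta$-prefix or neither does, and then the two images land in a common shifted copy and the interior scaling is by the \emph{same} factor $M^{-K}$, preserving comparability. One then iterates this observation down the common structure (there is none below level $0$ here, but the same reasoning applied after the reduction shows consistency), and the upper bound $\lesssim$ follows from the trivial containment $f(E_{\iii'})\subset Q_{\widetilde{\iii'}}$ together with a lower bound on $M^{-|\widetilde{\iii'}|}$ versus $M^{-|\iii'|}$ at the relevant truncation level. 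Writing $N = |\widetilde\kkk| - |\kkk| \ge 0$ for the number of substitutions along $\kkk$, the factor $M^{|\kkk|-|\widetilde\kkk|} = M^{-N}$ is exactly the accumulated contraction, giving the stated formula.
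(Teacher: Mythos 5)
Your reduction via \eqref{eq:f_commutator} and your treatment of the second equality are fine, and you correctly locate the danger: adjacent offspring cubes $Q_{\kkk i_1'}$, $Q_{\kkk j_1'}$ with $x,y$ near the common face, where $|x-y|$ can be arbitrarily small compared to $M^{-|\kkk|}$. But your proposed resolution of that case does not work, and this is exactly where the substance of the proposition lies. The observation that the substitution at the level immediately below $\kkk$ is triggered by the \emph{same} event for both branches is true (the paper uses it to get $|w-w'|\approx M^{|\kkk|-|\widetilde\kkk|}|u-u'|$, i.e.\ \eqref{eq:ww'}), but it only controls that single level. Below the splitting level the two branches live in different cubes and their substitution events are independent, so there is no ``common structure'' to iterate along, and your parenthetical ``the same reasoning applied after the reduction shows consistency'' has no content. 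The missing idea is the one the paper isolates as \eqref{eq:boundary_lip}: if the first substitution along the branch of $x$ inside $Q_\ppp$ occurs at relative level $n$, then the trigger (all boundary children of $Q_{\ppp'|_{n-1}}$ dead) forces the rescaled point $z$ \emph{and} its image $f_\ppp(z)$ to lie in the closed inner part of $Q_{\ppp'|_{n-1}}$, hence at distance $\ge M^{-n}$ from any point $v\in\partial[0,1]^d$, while $|f_\ppp(z)-z|\le M^{-n+1}$; consequently $|f_\ppp(z)-v|\approx|z-v|$. In other words, deep substitutions cannot displace the image by more than a constant times the point's distance to the boundary crossing point, \emph{because} the very event that triggers a substitution forbids the surviving point from being near that boundary. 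Nothing in your sketch substitutes for this mechanism.

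Note also that the same gap infects your upper bound: the ``trivial containment $f(E_{\iii'})\subset Q_{\widetilde{\iii'}}$'' only gives $|f(x)-f(y)|\lesssim M^{-|\widetilde\kkk|}$, which is far weaker than $\lesssim M^{|\kkk|-|\widetilde\kkk|}|x-y|$ precisely in the near-face regime $|x-y|\ll M^{-|\kkk|}$. The paper gets both directions by choosing crossing points $u\in\partial Q_{\kkk i_1'}$, $u'\in\partial Q_{\kkk j_1'}$ on the segment from $x$ to $y$ (and, for the lower bound, crossing points on the segment from $f(x)$ to $f(y)$), applying \eqref{eq:boundary_lip} on each piece, and combining with \eqref{eq:ww'} via the triangle inequality; your coordinatewise bookkeeping, as you yourself note mid-argument, does not close in the adjacent-cube case. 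So the proposal as written has a genuine gap at the heart of the proof, even though its framing (reduction to the subtree at $\kkk$, shared event at the splitting level) overlaps with ingredients the paper also uses.
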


\begin{proof}
The main geometric ingredient of the proof is the following observation: There is a constant $C>0$ such that the following holds for all $\ppp\in\Lambda^*$, $x\in Q_{\ppp}\cap E$ and $u\in\partial Q_{\ppp}$: Denote $u=\Pi(\ppp\mmm)$ and let $w=\Pi(\widetilde{\ppp}\mmm)$ (Note that $w$ equals $f(u)$, if $u\in E$, but if $u\notin E$, then $f(u)$ is not defined). Then 
\begin{equation}\label{eq:boundary_lip}
C^{-1} M^{|\ppp|-|\widetilde{\ppp}|}|x-u| \leq |f(x)-w|\leq C M^{|\ppp|-|\widetilde{\ppp}|}|x-u|\,.
\end{equation}

In order to prove \eqref{eq:boundary_lip}, write $x=\Pi(\ppp\ppp')$ and first consider $z=\Pi(\ppp')$ and $v=\Pi(\mmm)$. If $_\ppp\widetilde{\ppp'}=\ppp'$, we have $f_\ppp(z)-v=\Pi(\ppp')-v=z-v$. Otherwise, let $n>0$ be the smallest index such that $[\ppp\ppp'_{n-1}j]\cap T_{|\ppp|+n}(\omega)=\varnothing$ for all $j\in\Lambda_B$. Then we have the following estimates (see Figure \ref{fig:explaindisplayformulas}):
\begin{align}\label{kuva1}
z\in \overline{ Q_{\ppp'|_{n-1}}\setminus\bigcup_{j\in\Lambda_B}Q_{\ppp'|_{n-1}j}  }\,,
\end{align}
and
\begin{align}\label{kuva2}
f_\ppp(z)\in  Q_{\ppp'_{n-1}\eta}\subset \overline{ Q_{\ppp'|_{n-1}}\setminus\bigcup_{j\in\Lambda_B}Q_{\ppp'|_{n-1}j}  }\,,
\end{align}
and
\begin{align}\label{kuva3}
d\left(v,Q_{\ppp'|_{n-1}}\setminus\bigcup_{j\in\Lambda_B}Q_{\ppp'|_{n-1}j}\right)\ge M^{-n}\,.
\end{align}
giving the bounds $M^{-n} \le \min\{ |z-v|,|f_\ppp(z)-v| \}$ and $|f_\ppp(z)-z| \le M^{-n+1}$.
\begin{figure}
\centering
\begin{tikzpicture}[scale=1.2]
 \draw[thick] (-3,1.7) -- (-3,7.3);
 \coordinate (u) at (-3,3);
 \filldraw (u) circle (0.7pt);
 \node[right] at (u) {$v\in\partial [0,1]^d$};
\coordinate (r) at (2,2); 
\draw[pattern=north east lines,opacity=0.3] (r) rectangle ++ (5,5);
\node[right] (Ql) at (7,5) {$Q_{\ppp'|_{n-1}}$};
\fill[white] (r) ++ (1,1) rectangle ++ (3,3);
 \coordinate (x) at ($ (r) + (1.5,2.7) $); 
 \filldraw (x) circle (0.5pt);
 \node[right] at (x) {$z$};
 \draw[step=1,opacity=0.3] (r) ++ (1,1) grid ++ (3,3);
 \draw[|-|] (r) ++ (0,-0.2) -- node[below]{$M^{-n}$} ++ (1,0);
 \begin{scope}[scale=0.2, shift={(18,18)}]
 \draw[pattern=north east lines] (2,2) rectangle ++ (5,5);
 \draw[fill=white] (2,2) ++ (1,1) rectangle ++ (3,3);
 \draw[step=1] (2,2) ++ (1,1) grid ++ (3,3);
 \filldraw ($ (2,2) + (1.5,2.7) $) circle (0.5pt);
 \draw[<-] ($ (2,2) + (1.5,2.7) $) to [out=90,in=200] (7,10);
 \node[right] (fx) at (7,10) {$f_\ppp(z)$};
 \end{scope}
 \node[right] (Qleta) at (7,4) {$Q_{\ppp'|_{n-1}\eta}$};
 \draw[->] (7.1,4) to [out=120,in=20] (5.1,4.5);
\end{tikzpicture}
\caption{The formulas \eqref{kuva1}-\eqref{kuva3} are being illustrated here. Intuitively, $v$ is ``far'' from both $z$ and $f_\ppp(z)$ which in turn are ``near'' each other.}
\label{fig:explaindisplayformulas}
\end{figure}
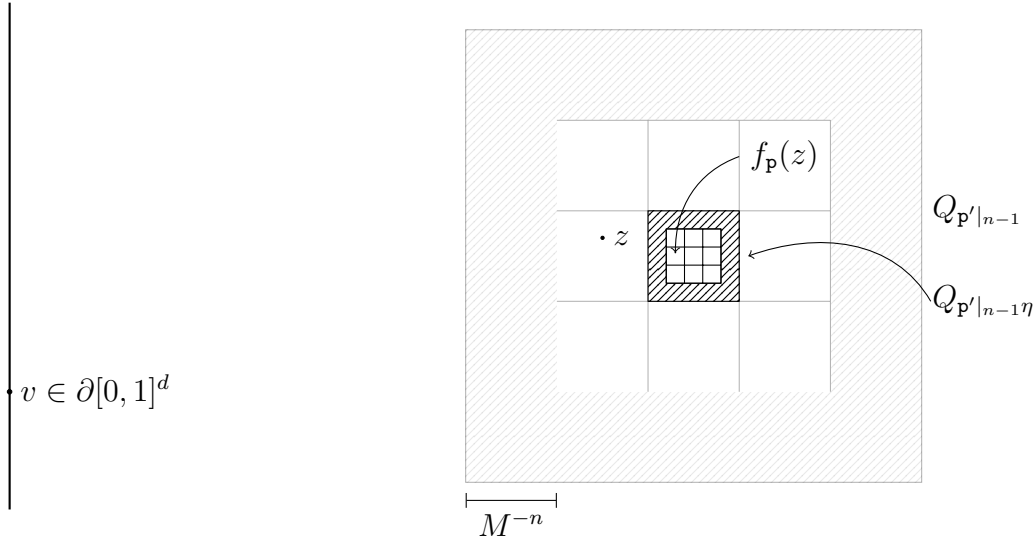
Thus we get
\[
 |z-v| \leq |f_\ppp(z) - z| + |f_\ppp(z)-v| \lesssim  |f_\ppp(z) - v| \leq |f_\ppp(z) - z| + | z -v | \lesssim | z - v |.
\]
In other words, $|f_\ppp(z)-v|\approx|z-v|$. Combining this with the identities
\begin{align*}
 |f(x)-w|=|\Pi(\widetilde{\ppp\ppp'})-\Pi(\widetilde\ppp\mmm)| &= M^{-|\widetilde\ppp|} |\Pi(_\ppp\widetilde\ppp')-\Pi(\mmm)| = M^{-|\widetilde\ppp|} |f_\ppp(z) - v|\,,\\
 |x-u| = |\Pi(\ppp\ppp')-\Pi(\ppp\mmm)| &= M^{-|\ppp|} |\Pi(\ppp')-\Pi(\mmm)| = M^{-|\ppp|} |z-v|\,,
\end{align*}
implies \eqref{eq:boundary_lip}.

Finally, let us derive the claim of the proposition from \eqref{eq:boundary_lip}. 
Let $u\in\partial Q_{\kkk i'_1}$ and $u'\in\partial Q_{\kkk j'_1}$ be the points along the line segment joining $x$ to $y$, and let $w$ and $w'$ be obtained from $u$ and $u'$ (respectively) as in \eqref{eq:boundary_lip}. 

Since either $\widetilde{\kkk i'_1}=\widetilde{\kkk}i'_1$ and $\widetilde{\kkk j'_1}=\widetilde{\kkk}j'_1$, or $\widetilde{\kkk i'_1}=\widetilde{\kkk}\eta i'_1$ and $\widetilde{\kkk j'_1}=\widetilde{\kkk}\eta j'_1$, it follows that either
$|w-w'|=M^{|\kkk|-|\widetilde{\kkk}|}|u-u'|$ or $|w-w'|=M^{|\kkk|-|\widetilde{\kkk}|-|\eta|}|u-u'|$. In any case,
\begin{equation}\label{eq:ww'}
|w-w'|\approx M^{|\kkk|-|\widetilde{\kkk}|}|u-u'|\,.
\end{equation}
Applying \eqref{eq:boundary_lip}, with $\ppp=\kkk i'_1,\kkk j'_1$, yields
\begin{align*}
|f(x)-f(y)|&\le |f(x)-w|+|w-w'|+|f(y)-w'|\\
&\approx M^{|\kkk i'_1|-|\widetilde{\kkk i'_1}|}|x-u|+M^{|\kkk|-|\widetilde{\kkk}|}|u-u'|+M^{|\kkk j'_1|-|\widetilde{\kkk j'_1}|}|u'-y|\\
&\approx M^{|\kkk|-|\widetilde{\kkk}|}|x-y|\,.
\end{align*}
To obtain the desired lower bound, we let $w\in\partial Q_{\widetilde{\kkk i'_1}}$, $w'\in\partial Q_{\widetilde{\kkk j'_1}}$ be the points along the line-segment joining $f(x)$ to $f(y)$ and let $u\in \partial Q_{\kkk i'_1}$, $u'\in\partial Q_{\kkk j'_1}$ be such that $w$ and $w'$ are obtained from $u$, $u'$ (respectively) as in \eqref{eq:boundary_lip}.  Then \eqref{eq:ww'} holds also for these $w,w',u,u'$.
Combining with \eqref{eq:boundary_lip}, we infer
\begin{align*}
|x-y|&\le |x-u|+|u-u'|+|y-u'|\\
&\approx M^{|\widetilde{\kkk}|-|\kkk|}|f(x)-w|+M^{|\widetilde{\kkk}|-|\kkk|}|w-w'|+M^{|\widetilde{\kkk}|-|\kkk|}|f(y)-w'|\\
&= M^{|\widetilde{\kkk}|-|\kkk|}|f(x)-f(y)|\,,
\end{align*}
as desired.
\end{proof}

\begin{proof}[Proof of Lemma \ref{lem:qs_eq}]
Let us show that the map $f$ defined by \eqref{eq:f} is a quasisymmetry $E(\omega)\to F(\omega)$. To that end, let $x,y,z\in E$, with $x=\Pi(\iii)$, $y=\Pi(\jjj)$, $z=\Pi(\ppp)$. Let $\kkk=\iii\wedge\jjj$.
We consider two different cases:

\emph{Case 1:} $\kkk \prec \ppp$. Let us denote $\ppp=\kkk\ppp'$ and $\iii=\kkk\iii'$. 
Let $\mmm=\iii'\wedge\ppp'$. It follows from \eqref{eq:fxfy} that
\begin{equation}\label{eq:xz}
|f(x)-f(z)|\approx M^{|\kkk\mmm|-|\widetilde{\kkk\mmm}|}|x-z|=M^{|\kkk|-|\widetilde{\kkk}|}M^{|\mmm|-|_\kkk\widetilde{\mmm}|}|x-z|\,.
\end{equation}
If $_\kkk\widetilde{\mmm}=\mmm$, then (using \eqref{eq:fxfy} for $x$ and $y$),
\[\frac{|f(x)-f(y)|}{|f(x)-f(z)|}\approx\frac{M^{|\kkk|-|\widetilde\kkk|}|x-y|}{M^{|\kkk|-|\widetilde\kkk|}|x-z|}\approx\frac{|x-y|}{|x-z|}\,.\]
Let us assume that $|_\kkk\widetilde{\mmm}|>|\mmm|$. In this case $|_\kkk\widetilde{\mmm}|=|\mmm|+N|\eta|$ for some $N\in\N$.  
Denoting $\mmm=m_1\ldots m_{|\mmm|}$, this means that $N$ of the symbols $m_i$, $i=1,\ldots,|\mmm|$ were replaced by $\eta m_i$. Thus, there is an index $0\le n\le |\mmm|-N$ such that 
\begin{equation}\label{eq:empty_bdr}
 [\kkk(\mmm|_{n})j]\cap T_{|\kkk|+n+1}(\omega)=\varnothing\text{ for all }j\in\Lambda_B\,.
\end{equation}
Since $x\in\Pi[\kkk\mmm]$ and $y\notin\Pi[\kkk(\mmm|_{n+1})]$, we obtain
\[
 |x-y|\ge d(x,\partial Q_{\kkk\mmm|_{n}})\ge M^{-|\kkk|-n-1}\ge M^{-\kkk-|\mmm|+N-1}\,.
\]
Putting this together with $|x-z|\le M^{-|\kkk|-|\mmm|}$ yields
\[M^{|_\kkk\widetilde{\mmm}|-|\mmm|}=M^{N|\eta|}\lesssim\left(\frac{|x-y|}{|x-z|}\right)^{|\eta|}\,,\]
so that recalling \eqref{eq:xz} and 
\[|f(x)-f(y)|\approx M^{|\kkk|-|\widetilde{\kkk}|}|x-y|\,,\]
we finally get,
\[
 \frac{|f(x)-f(y)|}{|f(x)-f(z)|}\approx M^{|_\kkk\widetilde{\mmm}|-|\mmm|}\frac{|x-y|}{|x-z|}\lesssim\left(\frac{|x-y|}{|x-z|}\right)^{|\eta|+1}\,.
\]
\emph{Case 2:} $\kkk\not\prec\ppp$. Denoting $\mmm=\kkk\wedge\ppp$ and $\kkk=\mmm\kkk'$, the estimate \eqref{eq:fxfy} gives
\[
 \frac{|f(x)-f(y)|}{|f(x)-f(z)|}
 \approx
 \frac{M^{|\kkk|-|\widetilde{\kkk}|}|x-y|}{M^{|\mmm|-|\widetilde{\mmm}|}|x-z|}
 =
 M^{|\kkk'|-|_\mmm\widetilde{\kkk'}|}\frac{|x-y|}{|x-z|}\le\frac{|x-y|}{|x-z|}\,.
\]
Thus, we have verified that $f$ is a quasisymmetry with control function $\eta(t)= C \max\{t,t^{|\eta|+1}\}$ with some constant $C<\infty$.
\end{proof}
\begin{figure}
\centering
\begin{tikzpicture}
\draw[thick] (0,-0.5) -- (0,6);
 \coordinate (y) at (0,3);
 \filldraw (y) circle (1pt);
 \node[left] at (y) {$y$};
 \coordinate (r) at (0,0); 
 \draw[pattern=north east lines,opacity=0.3] (r) rectangle ++ (5,5);
 \draw[fill=white] (r) ++ (1,1) rectangle ++ (3,3);
 \draw[step=1] (r) ++ (1,1) grid ++ (3,3);
 \coordinate (x) at ($ (r) + (1.5,2.7) $); 
 \filldraw (x) circle (1pt);
 \node[right] at (x) {$x$};
 \draw[|-|] (r) ++ (4,-0.2) -- node[below]{$M^{-|\kkk|-n-1}$} ++ (1,0);
 ;
 \node[] (Qi) at (1,5.5) {$Q_{\iii|_{|\kkk|+1}}$};
 \node[] (Qj) at (-1,5.5) {$Q_{\jjj|_{|\kkk|+1}}$};
 \node[above] (Qkm) at (4.5,4.9) {$Q_{\kkk(\mmm|_n)}$};
\end{tikzpicture}
\caption{In case (1) of the proof of Lemma \ref{lem:qs_eq}, $y$ can be in the boundary of $Q_{\iii|_{|\kkk|+1}}$ and thus also in $Q_{\kkk(\mmm|_n)}$, but due to 
	\eqref{eq:empty_bdr}, 
	it cannot fall into $Q_{\kkk(\mmm|_{n+1})}$.}
\label{fig:qs_proof}
\end{figure}
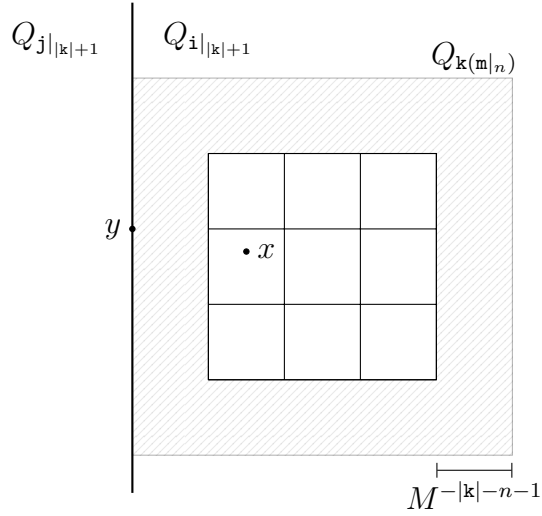

\section{Global conformal dimension}\label{sec:gcd}
In this Section,  we prove the global version of Theorem \ref{thm:notminimal}: 
\begin{theorem}\label{thm:glob}
Almost surely on non-extinction, the fractal percolation is not minimal for global conformal dimension.
\end{theorem}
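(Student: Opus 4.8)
The plan is to upgrade the quasisymmetry $f\colon E\to F$ constructed in Section~\ref{sec:cd} to a quasisymmetry of the whole space $\R^d$. Since Lemma~\ref{lem:dim_drop} already gives $\dim_H F<\dim_H E$ almost surely on non-extinction, and $\dim_H E$ attains its almost-sure constant value on this event, it suffices to produce a quasiconformal (equivalently, quasisymmetric) homeomorphism $g_\omega\colon\R^d\to\R^d$ with $g_\omega|_E = f$, or at least such that $\dim_H g_\omega(E)<\dim_H E$. I would first observe that the map $f$ has a completely explicit combinatorial description: on each surviving $M$-adic cube $Q_\iii$ whose entire boundary layer of children has died, $f$ conjugates (via \eqref{eq:f_commutator}) to the fixed homothety $h_{Q_{\iii'|_{n-1}\eta}}$ composed into the cube, i.e.\ it replaces a unit cube by a scaled inner copy. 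Away from those ``active'' cubes $f$ is, in the conjugated picture, the identity.

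The key step is a local extension lemma: on a single cube $Q$ whose boundary layer has perished, the map that sends the sub-cube $Q\setminus(\text{boundary layer})$ affinely onto $Q_{\ldots\eta}\subset\inter Q$ and is the identity on a definite collar extends to a $K_0$-quasiconformal self-homeomorphism of $\R^d$ supported in $Q$, with $K_0=K_0(M,d,K)$ independent of which cube $Q$ is. This is a standard radial/collar interpolation (e.g.\ by an explicit piecewise-linear or logarithmically-interpolated map between the two concentric cubes), and it crucially uses that the ratio between the two nested cubes is bounded and that there is a full ``dead'' annulus of width $\ge M^{-1}\cdot\text{side}(Q)$ separating $Q_{\ldots\eta}$ from $\partial Q$, so we never have to compress a positive-measure set to a lower-dimensional one. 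Once this is in place, I would compose these local pieces over the (random) family of active cubes of all generations: because active cubes of different generations are either nested or disjoint, and each local map is supported strictly inside its cube and equals the identity on the collar, the infinite composition converges uniformly (the generation-$n$ maps have diameters $\le M^{-n}$) to a homeomorphism $g_\omega\colon\R^d\to\R^d$. One checks that $g_\omega|_E=f$ by comparing the symbolic descriptions: following a point $\Pi(\iii)$ through the composition realizes exactly the substitution $\iii\mapsto\widetilde\iii$.

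It remains to argue that $g_\omega$ is quasisymmetric. The subtlety is that $g_\omega$ is an infinite composition and the naive bound on the quasiconformal constant, $K_0^{(\text{number of nested active cubes})}$, is not uniformly bounded. I would handle this exactly as in the proof of Lemma~\ref{lem:qs_eq}: rather than tracking the distortion constant of the composition, estimate $|g_\omega(x)-g_\omega(y)|$ directly for arbitrary $x,y\in\R^d$ in terms of $|x-y|$ and the combinatorics of the deepest cube containing both, using that each local map changes distances inside its cube only by a bounded factor and by the fixed scale factor $M^{-|\eta|}$ per active level, together with Proposition~\ref{prop:main_geom}-type reasoning. This yields, just as before, a control function of the form $\eta(t)=C\max\{t,t^{|\eta|+1}\}$, now for the map on all of $\R^d$; alternatively one can quote that a homeomorphism of $\R^d$ which is the locally uniform limit of uniformly quasiconformal maps is quasiconformal, but the direct three-point estimate is cleaner and mirrors the earlier argument.

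The main obstacle I anticipate is precisely this last point: ensuring the global map does not accumulate unbounded distortion across infinitely many nested active cubes. The resolution is that distortion is not multiplicative in the sense that matters here — the ``bad'' factor $K_0$ from each local interpolation is confined to the thin collar annulus of that cube, and for any fixed pair of points only \emph{one} such annulus (the one at the scale where the two points separate) is genuinely relevant; deeper cubes act on $x$ and $y$ by essentially the same similarity and cancel in ratios. Making this precise requires a careful case analysis of how $x$ and $y$ sit relative to the active-cube hierarchy — whether one contains the other, whether the separating scale is active or not — entirely parallel to Cases~1 and~2 in the proof of Lemma~\ref{lem:qs_eq}. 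Once that is done, Theorem~\ref{thm:glob} follows, since $g_\omega(E)=F$ and $\dim_H F<\dim_H E$ almost surely on non-extinction.
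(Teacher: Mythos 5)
Your proposal is correct and takes essentially the same route as the paper: the paper's auxiliary bi-Lipschitz map $g$ (identity on $\partial[0,1]^d$, a homothety taking $I=(1-\tfrac{2}{M})[0,1]^d$ onto $(1-\tfrac{2}{M})Q_\eta$) is exactly your local extension lemma, the extension is written in closed form (the substitution applied along the surviving prefix, then at most one application of $g_{Q_{\widetilde{\iii|_n}}}$ at the last surviving level) rather than as your infinite composition, and quasisymmetry is verified precisely as you propose, through the global analogue of Proposition \ref{prop:main_geom} (Proposition \ref{prop:glob}) and the three-point estimates of Lemma \ref{lem:qs_eq} instead of compounding quasiconformal constants. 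One small normalization to fix in your sketch: the local map must send the inner region onto the shrunk copy $(1-\tfrac{2}{M})Q_\eta$ inside $Q_\eta$ (not affinely onto all of $Q_\eta$), since that is what is forced by requiring the extension to realize the substitution $\iii\mapsto\widetilde\iii$ on $E$.
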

We will show that the quasisymmetry $f\colon E(\omega)\to F(\omega)$, constructed in Section \ref{sec:cd}, may be extended to a global quasisymmetry $f\colon\R^d\to\R^d$. Clearly, it is enough to extend $f$ to a quasisymmetry on  $[0,1]^d$ such that $f$ is the identity on the boundary of $[0,1]^d$.

The definition of $f$ on $[0,1]^d\setminus E$ is slightly technical, but geometrically intuitive: We try to stretch the distances as little as possible on the complement of $E$. We will use an auxiliary map $g\colon[0,1]^d\to[0,1]^d$ with the following properties (For a cube $Q$, we denote by $\kappa Q$ the cube concentric with $Q$ and with side length $\kappa$ times the side length of $Q$):
\begin{enumerate}
\item $g$ is the identity map on $\partial[0,1]^d$.
\item $g$ maps $I = (1-\tfrac2M)[0,1]^d$  onto $(1-\tfrac2M)Q_\eta$ and $g|_I$ is a homothety (a scaling composed with a translation).
\item $g$ is bi-Lipschitz.
\end{enumerate} 
It is easy to construct such a map $g$, see Figure \ref{fig:extending_g}. For a cube $Q\in\mathcal{Q}_n$, let $g_Q=h_Q\circ g\circ (h_Q)^{-1}$. Note that each $g_Q$ is bi-Lipschitz with the same constants as $g$.
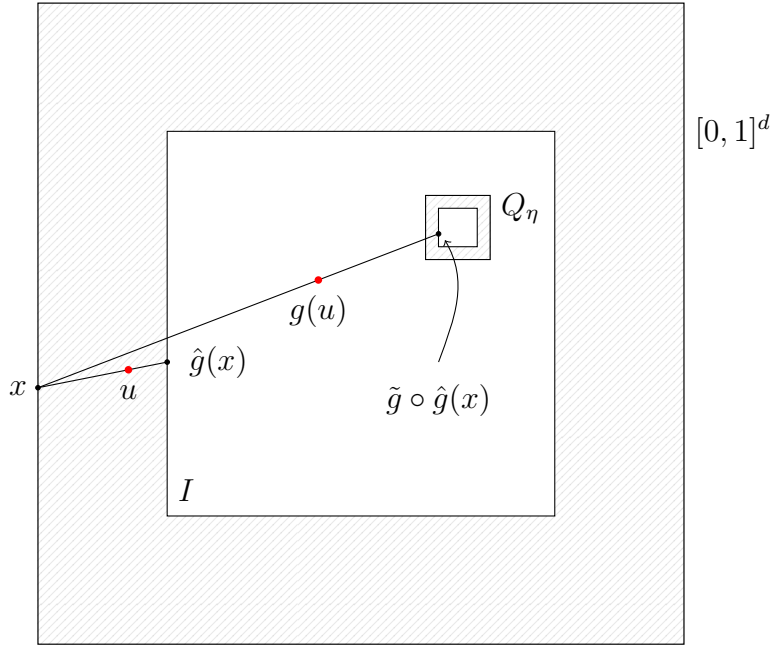
\begin{figure}
 \centering
 \begin{tikzpicture}[scale=1.7]
 \node[right] (Qi) at (5,4) {$[0,1]^d$};
 \draw[pattern=north east lines, opacity=0.3] (0,0) rectangle ++ (5,5);
 \draw (0,0) rectangle ++ (5,5);
 \draw[fill=white] (0,0) ++ (1,1) rectangle ++ (3,3);
 \node[right] (I) at (1,1.2) {$I$};
 \begin{scope}[shift={(3,3)}]
 \begin{scope}[scale=0.1]
 \node[right] (Qeta) at (5,4) {$Q_{\eta}$};
 \draw[pattern=north east lines, opacity=0.3] (0,0) rectangle ++ (5,5);
 \draw (0,0) rectangle ++ (5,5);
 \draw[fill=white] (0,0) ++ (1,1) rectangle ++ (3,3);
 \end{scope}
 \end{scope}
 \coordinate (x) at (0,2);
 \node[left] at (x) {$x$};
 \filldraw (x) circle (0.5pt);
 \coordinate (hatgx) at (1,2.2);
 \filldraw (hatgx) circle (0.5pt);
 \node[label=east:$\hat g(x)$] at (hatgx) {};
 \coordinate (ggx) at (3.1,3.2);
 \filldraw (ggx) circle (0.5pt);
 \draw[->] (ggx) ++ (0,-1) node[label=below:$\tilde g \circ\hat g(x)$]{} to [out=70,in=-60] ($ (ggx) + (0.05,-0.05) $);
 \draw[] (x) -- (hatgx);
 \draw[] (x) -- (ggx);
 \node [fill=red, circle,inner sep=1pt,label=below:$u$] (u) at ($ (x)!.7!(hatgx) $) {};
 \node [fill=red, circle,inner sep=1pt,label=below:$g(u)$] (gu) at ($ (x)!.7!(ggx) $) {};
\end{tikzpicture}
 \caption{
 One way to define $g$ is the following: Start with the homotheties $\hat g$ that maps $[0,1]^d$ to $I$, and $\tilde g$ that maps $[0,1]^d$ to $Q_\eta$. Let $g|_I = \tilde g|_I$. If $u\in [0,1]^d\setminus I$, then there are unique $x\in\partial[0,1]^d$ and $0\leq t < 1$ so that $u = (1-t)x + t \hat g(x)$. Set $g(u) = (1-t)x + t \tilde g \circ \hat g(x)$.
         }
 \label{fig:extending_g}
\end{figure}
We can now extend the map $f$ to $[0,1]^d\setminus E(\omega)$. Given $x=\Pi(\iii)\in [0,1]^d\setminus E(\omega)$, let $n\in\{0,1,2,\ldots\}$ be the largest index such that $\iii|_n\in T_n$. Write $\iii=\iii|_n\iii'=i_1\ldots i_n\iii'$.
%
If $T_{n+1}\cap\{\iii|_n j\,:\,j\in\Lambda_B\}\neq\varnothing$, we simply define 
 \[f(x)=\Pi(\widetilde{\iii_n}\iii')\]
%
Otherwise (if $T_{n+1}\cap\{\iii|_n j\,:\,j\in\Lambda_B\}=\varnothing$), we put
\[f(x)=g_{Q_{\widetilde{\iii|_n}}}(\Pi(\widetilde{\iii|_n}\iii'))\,.\]
Recall that if $n=0$, we interpret $\widetilde{\iii|_{n}}=\varnothing$, giving $f(x) = x$ in the first case and $f(x) = g(x)$ in the second case. On $E=E(\omega)$, we define $f$ as in the proof of Theorem \ref{thm:notminimal}. It is easy to check that $f$ is well defined, that is, independent of the representation $x=\Pi(\iii)$ (e.g. this follows from \eqref{eq:en_keksi_labelia} below). Moreover, it is an immediate consequence of the definition that $f(x)=x$, if $x\in\partial[0,1]^d$.

Given $\ppp\in T_n$, we also extend the ``blow-up'' maps $f_\ppp$ defined in \eqref{eq:f_l}. The maps $f_\ppp\colon[0,1]^d\to[0,1]^d$ are defined using the definition of $f$, but replacing $T(\omega)$ by the sub-tree rooted at $\ppp$. More precisely, we set
\[f_\ppp(x)=h^{-1}_{Q_{\widetilde{\ppp}}}\circ f\circ h_{Q_\ppp}(x)\,,\]
or in other words,
\begin{equation}
\label{eq:f_commutator_global}
 f(x) = h_{Q_{\widetilde{\ppp}}} \circ f_\ppp \circ h^{-1}_{Q_\ppp} (x)
\end{equation}
for all $x\in Q_\ppp$ and all $\ppp \in T_n$, $n\in\N$.

The following global version of Proposition \ref{prop:main_geom} describes the essential geometric properties of $f$.
\begin{proposition}\label{prop:glob}
Let $x = \Pi(\iii)$ and  $y=\Pi(\jjj)$ and let $\kkk=\iii\wedge\jjj$ and let $0\le n\le |\kkk|$ be the largest index such that $\kkk|_n\in T_n$. Let $\iii=\kkk|_n \iii'$, $\jjj=\kkk|_n \jjj'$ Then
\begin{equation}\label{eq:en_keksi_labelia}
|f(x)-f(y)|\approx M^{-|\widetilde{\kkk|_n}|}|\Pi(\iii')-\Pi(\jjj')|=M^{n-|\widetilde{\kkk|_n}|}|x-y|\,.
\end{equation}
\end{proposition}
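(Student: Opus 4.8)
The plan is to reduce the global statement to the already-established local estimate \eqref{eq:fxfy} of Proposition \ref{prop:main_geom}, treating the behaviour of $f$ on $[0,1]^d\setminus E$ as a bi-Lipschitz perturbation. First I would dispose of the trivial cases: if $n=|\kkk|$ and $\kkk\in T_{|\kkk|}$ with enough further survival that both $x$ and $y$ lie in $E$ (or more precisely fall into subcubes handled by the $E$-definition), then \eqref{eq:en_keksi_labelia} is exactly \eqref{eq:fxfy}. Otherwise, $\kkk|_n$ is the deepest surviving common ancestor, and I would restrict attention to the cube $Q_{\kkk|_n}$ via the commutation identity \eqref{eq:f_commutator_global}: writing $x = h_{Q_{\kkk|_n}}(x_0)$, $y = h_{Q_{\kkk|_n}}(y_0)$ with $x_0 = \Pi(\iii')$, $y_0 = \Pi(\jjj')$, we have $|f(x)-f(y)| = M^{-|\widetilde{\kkk|_n}|}\,|f_{\kkk|_n}(x_0) - f_{\kkk|_n}(y_0)|$, and $|x-y| = M^{-n}|x_0-y_0|$. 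So the whole claim reduces to showing $|f_{\ppp}(x_0)-f_{\ppp}(y_0)| \approx |x_0 - y_0|$ for the blow-up map $f_\ppp$ with $\ppp = \kkk|_n$, in the situation where $\ppp$ survives but no boundary child of $\ppp$ survives (the case where some boundary child survives is even easier, since then near the root $f_\ppp$ either is the identity on the relevant subcubes or is again governed by \eqref{eq:fxfy} applied at level $1$ inside $Q_\ppp$).

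The core estimate is therefore: if $T_1 \cap \Lambda_B = \varnothing$ for the tree defining $f_\ppp$, then $f_\ppp$ is bi-Lipschitz on $[0,1]^d$ with constants depending only on $M,d$ (and the bi-Lipschitz constant of $g$). To see this I would split $[0,1]^d$ into the inner region $I = (1-\tfrac2M)[0,1]^d$ and the outer shell. On $I$, the map $f_\ppp$ equals $g$ composed with the restriction of the ``original'' $f$-type map to $Q_\eta$ — more precisely $f_\ppp(z) = g(\Pi(\prescript{}{\ppp}{\widetilde{z}}))$, which factors as a fixed homothety $I\to (1-\tfrac2M)Q_\eta$ (with ratio $M^{-K}$, a constant) followed by a rescaled copy of $f$ on the subtree rooted at $\ppp\eta$; by \eqref{eq:fxfy} that rescaled $f$ distorts distances by a bounded factor, and composing with the fixed homothety only changes the constant. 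On the outer shell, $f_\ppp$ is the identity (no boundary child survives, but those cubes have already died, so $x\in[0,1]^d\setminus E$ there and the first clause of the definition applies with $n=0$, giving $f_\ppp(x)=x$ — wait, one must check the shell cubes are exactly the $\Lambda_B$ cubes; they are, so $f_\ppp = \mathrm{id}$ on the shell). The two pieces agree on the common boundary $\partial I$ up to the design of $g$, and since $g$ is bi-Lipschitz and interpolates between the identity on $\partial[0,1]^d$ and the homothety on $I$, a standard ``gluing'' argument (comparing $|f_\ppp(z)-f_\ppp(w)|$ to $|z-w|$ by cases according to which regions $z,w$ lie in, using that the diameters of the relevant regions are comparable) gives the two-sided bound.

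The main obstacle I anticipate is the bookkeeping in the gluing step: when $z$ lies deep inside $I$ (hence deep inside a tiny copy of the fractal structure) and $w$ lies in the outer shell, one needs that $|f_\ppp(z) - f_\ppp(w)| \approx |z-w|$ even though $f_\ppp(z)$ has been dragged into the small cube $Q_\eta$. The key point is that both $z$ and $w$ are then at distance $\gtrsim M^{-1}$ apart and $f_\ppp(z)\in Q_\eta$, $f_\ppp(w)=w$ are also at distance $\gtrsim M^{-K}$ apart with everything bounded above by $1$ — this is precisely the same ``$v$ far from $z$ and $f_\ppp(z)$'' mechanism as in \eqref{kuva1}--\eqref{kuva3}, so I would invoke exactly that geometric lemma. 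Once the bi-Lipschitz bound on $f_\ppp$ is in hand, substituting back through \eqref{eq:f_commutator_global} yields \eqref{eq:en_keksi_labelia} with implied constants depending only on $M$ and $d$, and the proposition follows. I would also remark that this simultaneously verifies that $f$ is well defined on $[0,1]^d$, since \eqref{eq:en_keksi_labelia} forces $f(x)=f(y)$ whenever $x=y$.
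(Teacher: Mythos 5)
Your opening reduction is fine: using \eqref{eq:f_commutator_global} to pass to the blow-up $f_{\ppp}$ with $\ppp=\kkk|_n$, the claim is equivalent to $|f_{\ppp}(x_0)-f_{\ppp}(y_0)|\approx|x_0-y_0|$ for $x_0=\Pi(\iii')$, $y_0=\Pi(\jjj')$ (and when $n<|\kkk|$ both points even share the maximal surviving prefix $\kkk|_n$, so the same clause of the extended definition applies to both and the estimate follows from the bi-Lipschitz property of $g_{Q_{\widetilde{\kkk|_n}}}$ alone). But the core estimate you then rely on is false: if $T_1\cap\Lambda_B=\varnothing$, the blow-up $f_{\ppp}$ is \emph{not} bi-Lipschitz on $[0,1]^d$ with constants depending only on $M,d$. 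The step where you invoke \eqref{eq:fxfy} to say that ``the rescaled copy of $f$ distorts distances by a bounded factor'' misreads that estimate: the comparability factor there is $M^{|\kkk|-|\widetilde{\kkk}|}$, which depends on the pair of points and tends to $0$ along branches where the substitution is triggered on many scales. Indeed, a uniformly bi-Lipschitz $f$ would preserve Hausdorff dimension and contradict Lemma \ref{lem:dim_drop}, and the control function $C\max\{t,t^{|\eta|+1}\}$ obtained in Lemma \ref{lem:qs_eq} is genuinely nonlinear for the same reason. (A smaller but symptomatic slip: in the case $T_1\cap\Lambda_B=\varnothing$ the shell cubes fall under the \emph{second} clause of the extended definition, so $f_{\ppp}=g$ there, not the identity.)

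The configurations your gluing step waves at are exactly where the content of the proposition lies, and your ``key point'' for them is not true: the inner region $I$ and the shell (and likewise two adjacent children cubes, in the case $n=|\kkk|$) share faces, so $z$ and $w$ in different pieces can be arbitrarily close, while $z$ may lie inside arbitrarily many nested substitution events of its own branch; such pairs are \emph{not} separated by $\gtrsim M^{-1}$, so \eqref{kuva1}--\eqref{kuva3} cannot be quoted off the shelf. What is needed is a boundary-distance estimate valid for arbitrary $x\in Q_{\ppp}$ (not only $x\in E$) and $u\in\partial Q_{\ppp}$, namely $|f(x)-f(u)|\approx M^{|\ppp'|-|\widetilde{\ppp'}|}|x-u|$ where $\ppp'$ is the longest surviving beginning of $\ppp$ --- this is \eqref{eq:boundary_lip_glob}, the global analogue of \eqref{eq:boundary_lip}, proved by locating the first substitution scale along the branch of $x$ and checking both clauses of the extended definition (including the $g_{Q}$-clause) --- followed by the chaining argument of Proposition \ref{prop:main_geom} through the points where the segment from $x$ to $y$ crosses $\partial Q_{\kkk|_n i_1'}$ and $\partial Q_{\kkk|_n j_1'}$. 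For the same reason, your dismissal of the case where some boundary child of $\ppp$ survives as ``governed by \eqref{eq:fxfy}'' is too quick: $x_0$ and $y_0$ need not lie in $E$, so \eqref{eq:fxfy} does not apply to them directly. Without an estimate of the type \eqref{eq:boundary_lip_glob}, the proposal does not close.
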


\begin{proof}
The main ingredient of the proof is the following generalization of \eqref{eq:boundary_lip}: Given $x\in Q_{\ppp}$ and $u\in\partial Q_{\ppp}$, let $\ppp'$ be the longest beginning of $\ppp$ such that $\ppp'\in T_{|\ppp'|}$. Write
$u=\Pi(\ppp'\mmm)$ 
Then
\begin{equation}\label{eq:boundary_lip_glob}
|f(x)-f(u)|\approx M^{|\ppp'|-|\widetilde{\ppp'}|}|x-u|\,.
\end{equation}
Once \eqref{eq:boundary_lip_glob} is verified, \eqref{eq:en_keksi_labelia} is achieved following the proof of Proposition \ref{prop:main_geom}.

To verify \eqref{eq:boundary_lip_glob} we may assume that $x\notin E$, as otherwise the claim follows from \eqref{eq:boundary_lip}. Write $x=\Pi(\iii)=\Pi(\iii|_n\iii')$ where $n$ is the largest index such that $\iii|_n\in T_n$. We will consider two cases:

\emph{Case 1:} $|\ppp|\ge n$. Note that in this case $\ppp'=\iii|_n$. If $T_{n+1}\cap\{\iii|_n j\,:\,j\in\Lambda_B\}\neq\varnothing$, then
\[f(x)=\Pi(\widetilde{\iii|_n}\iii')\]
and
\[f(u)=\Pi(\widetilde{\iii|_n}\mmm)\,.\] 
Thus
\begin{align*}
|f(x)-f(u)|=M^{|\ppp'|-|\widetilde{\ppp'}|}|\Pi(\ppp\iii')-\Pi(\ppp\mmm)|=M^{|\ppp'|-|\widetilde{\ppp'}|}|x-u|\,.
\end{align*}

On the other hand, if $T_{n+1}\cap\{\iii|_n j\,:\,j\in\Lambda_B\}=\varnothing$, then
\[f(x)=g_{Q_{\widetilde{\iii|_{n}}}}(\Pi(\widetilde{\iii|_{n}}\iii'))\]
and
\[f(u)=g_{Q_{\widetilde{\iii|_{n}}}}(\Pi(\widetilde{\iii|_{n}}\mmm)\,.\] 
Since
\begin{align*}
|\Pi(\widetilde{\iii|_n}\iii')-\Pi(\widetilde{\iii|_n}\mmm)|= M^{|\ppp'|-|\widetilde{\ppp'}|}|x-u|\,,
\end{align*}
and $g_{Q_{\widetilde{\iii|_n}}}$ is Bi-Lipschitz (with constants that are independent of $\iii$ and $n$), we infer
\[
 |f(x)-f(u)|\approx M^{|\ppp'|-|\widetilde{\ppp'}|}|x-u|\,.
\]

\emph{Case 2:} $n>|\ppp|$. We argue as in the proof of Proposition \ref{prop:main_geom}. Note that in this case $\ppp'=\ppp$ and $f(u)=\Pi(\widetilde{\ppp}\mmm)$.

Let us first assume that $|\ppp|=0$, so that $f(u)=u=g(u)$. If $\widetilde{\iii|_n}=\iii|_n$, then 
$f(x)=x$ or $f(x)=g_{Q_{\iii|_n}}(x)$ depending on $T_{n+1}\cap\{\iii|_n j\,:\,j\in\Lambda_B\}$. In both cases, it is easy to see that 
\[|f(x)-f(u)|=|f(x)-u|\approx |x-u|\,.\]
If $\widetilde{\iii|_n}\neq\iii|_n$, there is $n'>0$ (with $n'\leq n$) such that 
\begin{align*}
x&\in \overline{ Q_{\iii|_{n'-1}}\setminus\bigcup_{j\in\Lambda_B}Q_{\iii'|_{n-1}j}  }\,,\\
f(x)&\in  Q_{\iii_{n'-1}\eta}\subset \overline{ Q_{\iii|_{n'-1}}\setminus\bigcup_{j\in\Lambda_B}Q_{\iii|_{n'-1}j}  }\,,
\end{align*}
and 
the estimate
\[
|f(x)-f(u)|=|f(x)-u|\approx |x-u|
\]
follows as in the proof of Proposition \ref{prop:main_geom} (recall \eqref{kuva1}--\eqref{kuva3}). 

If $|\ppp|>0$, the same argument applies for $f_\ppp$ yielding
\[|f_\ppp\circ h_{Q_\ppp}^{-1}(x)-f_\ppp\circ h_{Q_{\ppp}}^{-1}(u)|\approx|h^{-1}_{Q_\ppp}(x)-h^{-1}_{Q_\ppp}(u)|= M^{|\ppp|}|x-u|\,.\]
Thus, recalling \eqref{eq:f_commutator_global},
\begin{align*}
 |f(x)-f(u)|
 &=
 | h_{Q_{\widetilde{\ppp}}} \circ f_\ppp \circ h^{-1}_{Q_\ppp}(x) - h_{Q_{\widetilde{\ppp}}} \circ f_\ppp \circ h^{-1}_{Q_\ppp}(u) | \\
 &\approx
 M^{|\ppp|-|\widetilde\ppp|} | x-u |
\end{align*}
which proves \eqref{eq:boundary_lip_glob} (recall that $\ppp=\ppp'$).
\end{proof}
\begin{proof}[Proof of Theorem \ref{thm:glob}]
The claim is derived from Proposition \ref{prop:glob} similarly as Theorem \ref{thm:notminimal} is derived from Proposition \ref{prop:main_geom}.
\end{proof}

\begin{remarks}\label{rem:final_remarks}
\emph{a)} Observe that the upper bound that we obtain for the conformal and global conformal dimension of $E$ is quantitative in terms of $M$, $d$, and $p$. Indeed, since \eqref{eq:simf} is valid for any choice of $K\in\N$, letting $K\to\infty$ and setting
\[
 \kappa'=\lim_{K\to\infty}\kappa(s,K)=1-\frac{(M-2)^d}{M^d}(1-p)^{M^d-(M-2)^d}\,,
 \]
it follows that, almost surely on non-extinction,
\begin{align}\label{eq:gap}
\cdimh E\le d+\frac{\log p}{\log M}+\frac{\log\kappa'}{\log M}=\dimh(E)+\frac{\log \kappa'}{\log M}\,.
\end{align}
Most likely, the above estimate is very far from being optimal. It remains a challenging open problem to determine the exact value of $\cdimh E$ in terms of $p,M$ and $d$.

\emph{b)} A celebrated result (see \cite{Kovalev2006}) for conformal dimension is the fact that it does not take values in $(0,1)$. Using \eqref{eq:gap}, we can improve this for fractal percolation sets as follows: Given $M$ and $d$, there is a quantitative $\varepsilon=\varepsilon(M,d)>0$ such that, almost surely, fractal percolation sets with $\dimh E<1+\varepsilon$ satisfy $\cdimh E=0$. 
For instance, if $d=2$ and $M=3$, then solving 
\[\frac{\log p}{\log 3}+\frac{\log\kappa'}{\log 3}=-1\]
for $p$ implies that $\varepsilon(3,2)\approx 0.00389$. Some other values of $\varepsilon$ include, $\varepsilon(4,2)\approx0.00556$, $\varepsilon(5,2)\approx 0.00608$, $\varepsilon(3,3)\approx 0.00157$, $\varepsilon(4,3)\approx 0.00240$.

\emph{c)} Arguably, the most studied quantity for fractal percolation is the critical value $0<p_c(M,d)<1$ such that for $0<p<p_c$, the fractal percolation sets are almost surely totally disconnected, whereas for $p>p_c$, the set $E$ contains nontrivial connected components almost surely on non-extinction. See e.g. \cite{Don2015} and references therein. Note that our results hold for all $p\in(0,1)$. However, our method does not drop the dimension of the union of nontrivial connected components of $E$, but recall that this union has Hausdorff dimension $<\dimh E$, almost surely on non-extinction.

\emph{d)} Our main results can be generalized to many other random fractals constructed using the $M$-adic cubes. In particular, if the offspring distribution of $Q_\iii$, $\iii\in\Lambda^*$, is driven by a Galton-Watson process (see e.g. \cite{ChenOjalaRossiSuomala2017} for more details) and if for each $\iii\in\Lambda^n$, the probability for
\[T_{n+1}\cap \{\iii i\,:\,i\in\Lambda_B\}=\varnothing\text{ and }T_{n+1}\cap \{\iii i\,:\,i\in\Lambda\setminus\Lambda_B\}\neq\varnothing\,,\]
conditioned on $\iii\in T_{n}$, 
is $\ge c>0$, then the resulting random set $E$ satisfies $\CC\dimh E<\dimh E$, almost surely on $E\neq\varnothing$.
\end{remarks}


\begin{thebibliography}{10}

\bibitem{BSS2018}
Riddhipratim Basu, Vladas Sidoravicius, and Allan Sly.
\newblock {Lipschitz embeddings of random fields}.
\newblock {\em Probab. Theory Related Fields}, 172(3-4):1121--1179, 2018.

\bibitem{BasuSly2014}
Riddhipratim Basu and Allan Sly.
\newblock {Lipschitz embeddings of random sequences}.
\newblock {\em Probab. Theory Related Fields}, 159(3-4):721--775, 2014.

\bibitem{BishopPeres2017}
Christopher~J. Bishop and Yuval Peres.
\newblock {\em {Fractals in probability and analysis}}, volume 162 of {\em
  {Cambridge Studies in Advanced Mathematics}}.
\newblock Cambridge University Press, Cambridge, 2017.

\bibitem{Bonk2006}
Mario Bonk.
\newblock {Quasiconformal geometry of fractals}.
\newblock In {\em {International {C}ongress of {M}athematicians. {V}ol. {II}}},
  pages 1349--1373. Eur. Math. Soc., Z{\"u}rich, 2006.

\bibitem{BonkMerenkov2013}
Mario Bonk and Sergei Merenkov.
\newblock {Quasisymmetric rigidity of square {S}ierpi{\'n}ski carpets}.
\newblock {\em Ann. of Math. (2)}, 177(2):591--643, 2013.

\bibitem{ChenOjalaRossiSuomala2017}
Changhao Chen, Tuomo Ojala, Eino Rossi, and Ville Suomala.
\newblock {Fractal percolation, porosity, and dimension}.
\newblock {\em J. Theoret. Probab.}, 30(4):1471--1498, 2017.

\bibitem{DavidSemmes1997}
Guy David and Stephen Semmes.
\newblock {\em {Fractured fractals and broken dreams}}, volume~7 of {\em
  {Oxford Lecture Series in Mathematics and its Applications}}.
\newblock The Clarendon Press, Oxford University Press, New York, 1997.

\bibitem{Don2015}
Henk Don.
\newblock {New methods to bound the critical probability in fractal
  percolation}.
\newblock {\em Random Structures Algorithms}, 47(4):710--730, 2015.

\bibitem{Falconer1997}
Kenneth~J. Falconer.
\newblock {\em {Techniques in Fractal Geometry}}.
\newblock John Wiley {\&} Sons Ltd., England, 1997.

\bibitem{Heinonen2001}
Juha Heinonen.
\newblock {\em {Lectures on analysis on metric spaces}}.
\newblock {Universitext}. Springer-Verlag, New York, 2001.

\bibitem{KaenmakiOjalaRossi2018}
Antti K{\"a}enm{\"a}ki, Tuomo Ojala, and Eino Rossi.
\newblock {Rigidity of quasisymmetric mappings on self-affine carpets}.
\newblock {\em Int. Math. Res. Not. IMRN}, (12):3769--3799, 2018.

\bibitem{Kovalev2006}
Leonid~V. Kovalev.
\newblock {Conformal dimension does not assume values between zero and one}.
\newblock {\em Duke Math. J.}, 134(1):1--13, 2006.

\bibitem{Kwapisz2017}
Jaroslaw Kwapisz.
\newblock {Conformal Dimension via p-Resistance: Sierpi{\'n}ski Carpet}.
\newblock Preprint
  http://www.math.montana.edu/jarek/documents/papers/sierpDim.pdf, 2017.

\bibitem{Luukkainen1998}
Jouni Luukkainen.
\newblock {Assouad dimension: antifractal metrization, porous sets, and
  homogeneous measures}.
\newblock {\em J. Korean Math. Soc.}, 35(1):23--76, 1998.

\bibitem{LyonsPeres2016}
Russell Lyons and Yuval Peres.
\newblock {\em {Probability on trees and networks}}, volume~42 of {\em
  {Cambridge Series in Statistical and Probabilistic Mathematics}}.
\newblock Cambridge University Press, New York, 2016.

\bibitem{Mackay2011}
John~M. Mackay.
\newblock {Assouad dimension of self-affine carpets}.
\newblock {\em Conform. Geom. Dyn.}, 15:177--187, 2011.

\bibitem{MackayTyson2010}
John~M. Mackay and Jeremy~T. Tyson.
\newblock {\em {Conformal dimension}}, volume~54 of {\em {University Lecture
  Series}}.
\newblock American Mathematical Society, Providence, RI, 2010.
\newblock Theory and application.

\bibitem{TukiaVaisala1980}
P.~Tukia and J.~V{\"a}is{\"a}l{\"a}.
\newblock {Quasisymmetric embeddings of metric spaces}.
\newblock {\em Ann. Acad. Sci. Fenn. Ser. A I Math.}, 5(1):97--114, 1980.

\bibitem{TysonWu2006}
Jeremy~T. Tyson and Jang-Mei Wu.
\newblock {Quasiconformal dimensions of self-similar fractals}.
\newblock {\em Rev. Mat. Iberoam.}, 22(1):205--258, 2006.

\end{thebibliography}
\end{document}